\numberwithin{equation}{section} 
\newtheorem{theorem}{Theorem}[section]
\newtheorem{lemma}[theorem]{Lemma}
\newtheorem{proposition}[theorem]{Proposition}
\theoremstyle{definition}
\newtheorem{definition}[theorem]{Definition}
\newcommand\R{\mathbb{R}}
\newcommand\C{\mathbb{C}}
\newcommand{\qtq}[1]{\quad\text{#1}\quad}
\renewcommand\epsilon{\varepsilon}
\newcommand{\1}{\mathds{1}}
\newcommand{\Schwartz}{\mathcal{S}}
\let\Re=\undefined\DeclareMathOperator{\Re}{Re}
\let\Im=\undefined\DeclareMathOperator{\Im}{Im}
\newcommand{\rn}[1]{%
  \textup{\uppercase\expandafter{\romannumeral#1}}%
}
\newcommand{\jb}[1]{\langle#1\rangle}
\begin{document}

\title[On ill-posedness for GT]{On ill-posedness for the Gabitov--Turitsyn equation}

\author[M.~Kowalski]{Matthew Kowalski}
\address{Department of Mathematics, University of California, Los Angeles, CA 90095, USA}
\email{mattkowalski@math.ucla.edu}

\begin{abstract}
    We investigate the well- and ill-posedness theory for the Gabitov--Turitsyn equation, which models the long-time dynamics of pulses in dispersion-managed optical fibers. We identify two critical regularities, corresponding to two scaling pseudo-symmetries, that demarcate regimes of ill-posedness. First, we identify $s_m = \tfrac{d}{2} - \tfrac{2}{p},$ coinciding with the monomial NLS. For $s \geq \max(s_m,0)$, local well-posedness is known to hold in $H^s$, while for $s < s_m$, we show that the data-to-solution map fails to be $C^{p+1}$ in $H^s$. Second, we identify $s_i = \tfrac{d}{2} - \tfrac{4}{p},$ below which we conjecture that norm inflation occurs. We resolve this conjecture in $H^s$ in the case $s < \min (s_i, 0)$---specifically for the one-dimensional cubic model---and in the case $1 \leq s < s_i$. In the case $s_i \geq 1$, we establish norm inflation by showing that suitable solutions undergo {\em energy equipartition}: a rapid renormalization of kinetic and potential energy.
\end{abstract}

\maketitle

\section{Introduction}\label{intro}
    \noindent
We investigate the well- and ill-posedness theory of the {\em Gabitov--Turitsyn equation}:
\begin{equation}\label{GT}\tag{GT}\begin{split}
        iu_t + \jb{\gamma} \Delta u + \int_0^1 e^{-i\sigma \Delta} \Big[\big|e^{i\sigma \Delta} u\big|^p \cdot e^{i\sigma \Delta} u \Big]d\sigma = 0, \quad
        u(0,x) = u_0(x),
\end{split}\end{equation}
where $u : (-T, T) \times \R^d_x \to \C$ is a complex function of spacetime and $\jb{\gamma} \neq 0$ is a constant. To justify a power series expansion, we make the assumption that $p \geq 2$ is even.

The equation \eqref{GT} is the primary, effective model of large-scale pulse dynamics in dispersion-managed optical fibers; see Section~\ref{fibers} for further discussion. The constant term $\jb{\gamma}$ is the net dispersion of the fiber, with $\jb{\gamma} > 0$ representing the focusing equation and $\jb{\gamma} < 0$ representing the defocusing. For our purposes, the exact value of $\jb{\gamma}$ is inconsequential; however, it is expected to affect the broader theory. Although not considered here, significant attention has also been paid to the vanishing dispersion, $\jb{\gamma} = 0$ case, which remarkably supports soliton solutions\cite{solitons-zero-dispersion, solitons-zero-dispersion-reason}.

The model \eqref{GT} is a highly nonlocal\footnote{Indeed, due to the linear propagators, \eqref{GT} can be seen as {\em nonlocal-in-time}.} variant of the nonlinear Schr\"odinger equation \eqref{NLS}. As such, it is a Hamiltonian partial differential equation, with mass and energy given by
\begin{equation}\label{energy}
    M(u) = \int_{\R^d} |u|^2 \qtq{and} E(u) = - \tfrac{\jb{\gamma}}{2} \int |\nabla u|^2 + \tfrac{1}{p+2} \iint_0^1 |e^{i \sigma \Delta} u|^{p+2} d\sigma dx.
\end{equation}
We note that this closely resembles the energy of the monomial \eqref{NLS}, only with the potential energy averaged in time.

Central to our narrative is the lack of a scaling symmetry for \eqref{GT}. Indeed, the interval $[0,1]$ in the nonlinearity fixes a fundamental length of time, destroying any genuine scaling symmetries and obscuring the theory of well- and ill-posedness. To surmount this, we identify two {\em scaling pseudo-symmetries} for \eqref{GT},\eqref{intro/monomial} and \eqref{intro/integrated}. These pseudo-symmetries do not preserve the class of solutions, but instead take solutions of \eqref{GT} to solutions of a \eqref{GT}-variant with the integration altered. 

Before detailing the scaling pseudo-symmetries, let us outline our main results. These scaling pseudo-symmetries identify two `critical' regularities: $s_m = \frac{d}{2} - \frac{2}{p}$ and $s_i = \frac{d}{2} - \frac{4}{p}$. Together with the Galilean threshold, $s = 0$, we find that these critical regularities demarcate regimes of well- and ill-posedness for \eqref{GT}:

\subsection*{Outline of main results}
    {\em \begin{enumerate}[(I)]
        \item {Theorem~\ref{intro/lwp}}: {\em For $s \geq \max(s_m, 0)$, \eqref{GT} is locally well-posedness in $H^s$ with an analytic dependence on the initial data and globally well-posed for small initial data.}

        \item {Theorem~\ref{intro/analytic-IP}}:
        {\em For $s < s_m$, we show that the data-to-solution map of \eqref{GT} fails to be smooth from $H^s \to C_t H_x^s$.}

        \item {Theorems \ref{intro/inflation-negative} and \ref{intro/inflation-energy}}:
        {\em For $s < s_i$, we conjecture that norm inflation occurs. This is shown in $H^s$ in the mass-subcritical $s < \min(s_i,0)$ case---specifically for the one-dimensional cubic \eqref{GT}---and energy-supercritical case $1 \leq s < s_i$.}
    \end{enumerate}}

\subsection{Monomial scaling pseudo-symmetry}
    Under the usual scaling symmetry for monomial \eqref{NLS},
    \begin{equation}\label{intro/monomial}
        u(t,x) \;\longmapsto\; \lambda^{-\frac{2}{p}} u\Big(\frac{t}{\lambda^2}, \frac{x}{\lambda}\Big),
    \end{equation}
    solutions of \eqref{GT} will be mapped to solutions of an altered version of \eqref{GT}:
    \begin{equation*}
        iv_t + \jb{\gamma} \Delta v + \lambda^{-2} \int_0^{\lambda^{2}} e^{-i\sigma \Delta} \Big[\big|e^{i\sigma \Delta} v\big|^p \cdot e^{i\sigma \Delta} v \Big]d\sigma = 0.
    \end{equation*}
    Notably, the nonlinearity is still an averaging, but is now over a new interval. Associated to this scaling is a critical regularity\footnote{The subscript $m$ is used to indicate `monomial' for quantities associated to the monomial \eqref{NLS}.} $s_m = \frac{d}{2} - \frac{2}{p}$.
    
    For $s \geq \max(s_m,0)$, it is known that \eqref{GT} is locally well-posed in $H^s$ and globally well-posed for small initial data; see earlier work by Kawakami and Murphy \cite{GT-Jason-small-large-scattering}. To align with our first ill-posedness result, and for the sake of completeness, we present a proof of the following theorem via power-series methods; see, e.g., \cite{series-Christ, series-Tao}.
    \begin{theorem}\label{intro/lwp}
        For $u_0 \in \dot{H}^{s_m \vee 0}$ and $T \sim \|u_0\|^{-p}_{\dot{H}^{s_m \vee 0}}$, there exists a unique solution to \eqref{GT}:
        \begin{equation*}
            u \in C_t \dot{H}^{s_m\vee 0}((-T,T) \times \R^d) \qtq{with} u(0,x) = u_0(x).
        \end{equation*} 
        Moreover, the data-to-solution map is real analytic on a neighborhood of $u_0 = 0$: for $\|u_0\|_{\dot{H}^{s_m \vee 0}} \leq R$ and $T \sim R^{-p}$ the data-to-solution map 
        \begin{equation*}
            u_0 \in B_R\big(\dot{H}^{s_m \vee 0}\big) \quad \longmapsto \quad u \in C_t \dot{H}_x^{s_m \vee 0} \big((-T,T) \times \R^d\big)
        \end{equation*}
        satisfies a power series expansion; see \eqref{lwp/power-series}.
    
        In addition, there exists $\delta = \delta(p,d)$ such that for all $\|u_0\|_{\dot{H}^{s_m \vee 0}} < \delta$, the associated solution $u$ may be extended to a global solution in $C_t \dot{H}_x^{s_m\vee 0}(\R \times \R^d)$ which scatters.
    \end{theorem}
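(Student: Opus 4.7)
The plan is to construct solutions by recasting the standard contraction mapping as a convergent power series in the initial data, in the spirit of \cite{series-Christ, series-Tao}. Writing \eqref{GT} in Duhamel form,
\begin{equation*}
    u(t) = e^{it\jb{\gamma}\Delta}u_0 - i\int_0^t e^{i(t-s)\jb{\gamma}\Delta}\,\mathcal{N}[u(s)]\,ds,
    \qquad \mathcal{N}[v] := \int_0^1 e^{-i\sigma\Delta}\bigl[|e^{i\sigma\Delta}v|^p\, e^{i\sigma\Delta}v\bigr]d\sigma,
\end{equation*}
and using that $p$ is even (so $\mathcal{N}$ is a $(p+1)$-linear polynomial in $v$ and $\bar v$), I would iterate to obtain a formal series $u = \sum_{k\geq 0} U_k[u_0,\bar u_0]$ whose $k$th term $U_k$ is homogeneous of degree $pk+1$ and is built from $k$ iterations of Duhamel integration plus $\sigma$-averaging acting on $pk+1$ copies of the linear evolution $e^{it\jb{\gamma}\Delta}u_0$, indexed by a tree combinatorics of cardinality $\leq C^k$.

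The heart of the proof is a multilinear Strichartz estimate of the form
\begin{equation*}
    \bigl\| U_k[u_0,\bar u_0] \bigr\|_{C_t \dot H^{s_m\vee 0}_x \cap X([-T,T])} \leq \bigl(CT^\alpha \|u_0\|_{\dot H^{s_m\vee 0}}^{p}\bigr)^k\, \|u_0\|_{\dot H^{s_m\vee 0}}
\end{equation*}
in a suitable auxiliary Strichartz norm $X$ (admissible at the scaling of $\dot H^{s_m}$ when $s_m\geq 0$, and a subcritical $L^2$-based pair when $s_m<0$). The key observation for handling the $\sigma$-integration in $\mathcal{N}$ is that Minkowski's inequality together with the $L^2$-isometry property of $e^{-i\sigma\Delta}$ yield, for any $r\in[2,\infty]$,
\begin{equation*}
    \|\mathcal{N}[v]\|_{L^{r'}_x} \leq \int_0^1 \bigl\|e^{i\sigma\Delta}v\bigr\|_{L^{(p+1)r'}_x}^{p+1}\,d\sigma,
\end{equation*}
so that after a dual Strichartz estimate one is reduced to bounding $\int_0^1 \|e^{i\sigma\Delta}v\|_X^{p+1}\,d\sigma$. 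Since Strichartz norms of linear Schr\"odinger evolutions are invariant under the additional propagator $e^{i\sigma\Delta}$---which just shifts the time variable---the $\sigma$-integral contributes at most an $O(1)$ constant, and the multilinear estimate closes precisely as for monomial \eqref{NLS}. Summing against the $C^k$ tree count yields convergence on $[-T,T]$ whenever $T\lesssim \|u_0\|^{-p}$, and the Lipschitz version of the same estimate produces uniqueness and real-analyticity of the data-to-solution map.

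For the small-data global and scattering claim, I would replace the local-in-time Strichartz estimates by their global-in-time counterparts: once $\|u_0\|_{\dot H^{s_m\vee 0}}<\delta(p,d)$, the multilinear bounds close on all of $\R$, and scattering follows by showing that $e^{-it\jb{\gamma}\Delta}u(t)$ is Cauchy in $\dot H^{s_m\vee 0}$ as $t\to\pm\infty$, with the tail controlled by the global $\sum_k\|U_k\|$. The main obstacle I anticipate is combinatorial bookkeeping: after $k$ Duhamel iterations one encounters a nested tree of linear propagators alongside $k$ distinct $\sigma$-integrals, and these must be interchanged with the Strichartz norms (via Minkowski and Fubini) in the correct order so as to avoid any factorial blow-up---only the exponential $C^k$ loss is tolerable. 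Once this is organized, each $\sigma$-slice reduces to a familiar monomial-\eqref{NLS}-type multilinear Strichartz estimate and the rest of the argument is essentially standard.
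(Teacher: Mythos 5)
Your overall architecture (Duhamel iteration, homogeneous terms $U_k$ of degree $pk+1$, tree count $\leq C^k$, summation of a multilinear estimate) matches the paper's, but the core multilinear estimate is not the one the paper uses, and as you have set it up it cannot deliver the theorem as stated. You propose to estimate the Duhamel term by a dual Strichartz estimate \emph{in the time variable $t$}, closing in an auxiliary Strichartz space $X$ at the scaling of $\dot H^{s_m}$. At the critical regularity $s_m \geq 0$ this produces \emph{no} positive power of $T$ (that is exactly why critical NLS has an existence time depending on the profile rather than the norm), so your claimed bound $(CT^{\alpha}\|u_0\|^p)^k$ with $\alpha>0$ is unjustified; you would only obtain a small-data local statement, not $T\sim\|u_0\|_{\dot H^{s_m\vee 0}}^{-p}$. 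Moreover, working in $C_t\dot H^{s}\cap X$ gives uniqueness only conditionally on membership in $X$, whereas the theorem asserts uniqueness in $C_t\dot H^{s_m\vee 0}$ alone. The idea you are missing is that the Strichartz estimate should be applied \emph{in the averaging variable $\sigma$ over the fixed interval $[0,1]$, pointwise in $s$}: for each fixed $s$ one bounds the product $e^{i\sigma\Delta}f_0(s)\cdots e^{i\sigma\Delta}f_p(s)$ in $L^1_\sigma\dot H^{s_m}_x$ by H\"older and the $\sigma$-Strichartz pair $\bigl(p+1,\tfrac{2d(p+1)}{d(p+1)-4}\bigr)$, obtaining $\lesssim\prod_k\|f_k(s)\|_{\dot H^{s_m}}$; the $\int_0^t ds$ integral of an $L_s^\infty$ quantity then yields the full factor $T$. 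Because the $\sigma$-interval is fixed and never rescaled, this renders the estimate effectively subcritical, gives $\|N_p\|_{S}\leq C_pT\prod\|f_k\|_{S}$ with $S=C_t\dot H^{s_m\vee 0}$ and no auxiliary norms, and hence both the claimed existence time and unconditional uniqueness.

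A secondary issue: your assertion that the extra propagator $e^{i\sigma\Delta}$ ``just shifts the time variable'' and so costs only $O(1)$ in Strichartz norms is valid only when it acts on a free evolution. When it acts on a Duhamel integral (i.e., on internal nodes of your trees, and in the global-in-time argument), one needs the shifted Strichartz estimates
\begin{equation*}
    \Bigl\|\int_0^t e^{i(t-s+\theta-\sigma)\Delta}F(s)\,ds\Bigr\|_{L_t^{q_1}L_x^{r_1}}\lesssim\|F\|_{L_t^{q_2'}L_x^{r_2'}},
\end{equation*}
uniformly in the shift parameters; note also that $e^{i\sigma\Delta}$ is not a time translate of the flow $e^{it\jb{\gamma}\Delta}$ unless $\jb{\gamma}=1$. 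These estimates are available (Proposition~\ref{prel/shifted-Strichartz}) and are what the paper uses for the small-data global and scattering argument, where your outline is otherwise on the right track.
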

    Curiously, this theorem is stronger than the local well-posedness theory for the monomial \eqref{NLS}. First, the dependence of $T$ solely on $\dot{H}^{s_m \vee 0}$ is reminiscent of a sub-critical model. For critical models, in contrast, we expect the time of existence to depend on the profile of the initial data. In addition, we avoid the use of auxiliary spaces and work directly in $L_t^\infty H_x^{s_m \vee 0}$, giving unconditional uniqueness. Analytically, these benefits arise from the averaging, which allows for Strichartz estimates to be used directly on the nonlinearity; see the progression from \eqref{lwp/1} to \eqref{lwp/2}. 
    
    Prior work by Choi, Hundertmark, and Lee \cite{GT-Choi-wp} considered high-regularity well-posedness of \eqref{GT} with a focus on remarkably general nonlinearities, in the sense of \eqref{fibers/GT}.
    
    For $s < s_m$, we find that the data-to-solution map of \eqref{GT} fails to be $C^{p+1}$ from $H^s \to C_t H_x^s$ in a Fr\'echet sense.
    As contraction mapping arguments for dispersive PDEs naturally achieve an analytic dependence on the initial data, this indicates that modern (non-integrable) well-posedness tools fail for \eqref{GT} for $s < s_m$. In this sense, $s_m$ forms a threshold for the well-posedness theory of \eqref{GT}.
    
    We wish to be careful with the statement of this result. To clarify our meaning, we briefly recall the argument of Bourgain in \cite[\S 6]{series-Bourgain}: For $\phi \in \Schwartz(\R^d)$, consider the initial value problem \eqref{GT} with initial data $u_0 = \delta \phi$ for some $\delta > 0$. Let $u = u(\delta,\phi)$ be the corresponding local solution (given, for instance, by Theorem~\ref{intro/lwp}). Then, by iterating the Duhamel formula, we find that
    \begin{equation*}
        \frac{\partial^{p+1} u}{\partial \delta^{p+1}} \bigg|_{\delta = 0} = i \int_0^t \int_0^1 e^{i\jb{\gamma}(t-s)\Delta -i\sigma\Delta} \Big[\big|e^{i(s + \sigma)\Delta} \phi\big|^p \cdot e^{i(s+\sigma)\Delta} \phi \Big] d\sigma ds = \Xi_1(\phi).
    \end{equation*}
    Which is to say that $\Xi_1(\phi)$ is the $(p+1)^{\text{st}}$ directional derivative of the data-to-solution map at initial data $0$ in the direction of $\phi$. We then find that:
    \begin{theorem}\label{intro/analytic-IP}
        For any $T > 0$ and $s < s_m$, the $(p+1)^{\text{st}}$ derivative of the data-to-solution map, $\Xi_1$, fails to be bounded $H^s \to L_t^\infty H_x^s((-T,T))$ for Schwartz data.
    \end{theorem}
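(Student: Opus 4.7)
The goal is to produce a sequence of Schwartz initial data $\phi_\lambda$ such that $\|\Xi_1(\phi_\lambda)\|_{L^\infty_t H^s_x((-T,T))}$ grows faster than $\|\phi_\lambda\|_{H^s}^{p+1}$ as $\lambda \to \infty$; by the $(p+1)$-homogeneity of $\Xi_1$, this defeats any putative bound of the form $\|\Xi_1(\phi)\|_{L^\infty_t H^s_x} \lesssim \|\phi\|_{H^s}^{p+1}$ for Schwartz $\phi$.

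Following Bourgain's strategy in \cite[\S6]{series-Bourgain}, I would exploit the monomial scaling pseudo-symmetry \eqref{intro/monomial}. Taking $\phi_\lambda(x) := \lambda^{2/p}\phi(\lambda x)$ for a fixed Schwartz bump $\phi$, the rescaling identity $e^{i\tau\Delta}[f(\lambda\,\cdot)](x) = [e^{i\tau\lambda^2\Delta}f](\lambda x)$ together with the substitutions $(s', \sigma') = (\lambda^2 s, \lambda^2 \sigma)$ in the $s$- and $\sigma$-integrals yields
\[
    \Xi_1(\phi_\lambda)(t, x) = \lambda^{2/p - 2}\,\mathcal{J}_{\lambda^2}\bigl(\lambda^2 t,\, \lambda x;\, \phi\bigr),
\]
where $\mathcal{J}_L(T, x; \phi)$ is the analog of $\Xi_1$ with the $\sigma$-average taken over $[0, L]$ rather than $[0, 1]$. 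Comparing with $\|\phi_\lambda\|_{\dot H^s} \simeq \lambda^{s - s_m}\|\phi\|_{\dot H^s}$, this reduces the desired blow-up to a lower bound of the shape
\[
    \frac{\|\Xi_1(\phi_\lambda)(t)\|_{\dot H^s}}{\|\phi_\lambda\|_{\dot H^s}^{p+1}} \simeq \lambda^{p(s_m - s) - 2}\,\|\mathcal{J}_{\lambda^2}(\lambda^2 t, \cdot\,; \phi)\|_{\dot H^s}.
\]

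The remaining step is to exhibit sufficient polynomial growth of $\|\mathcal{J}_{\lambda^2}(\lambda^2 t, \cdot\,; \phi)\|_{\dot H^s}$ in $\lambda$, for some $|t| < T$, to overcome the deficit $\lambda^{-2}$. Using the stationary-phase asymptotic $\widehat{G_\tau}(\xi) \sim \tau^{-pd/2}\,e^{-i\tau|\xi|^2}\,|\hat\phi|^p\hat\phi(\xi)$ for $G_\tau = |e^{i\tau\Delta}\phi|^p\,e^{i\tau\Delta}\phi$, the phase $e^{-i\tau|\xi|^2}$ precisely cancels the dispersion $e^{-i\sigma\Delta}$ in the definition of $\mathcal{J}_L$, leaving a non-oscillating $\sigma$-integrand that accumulates as $L \to \infty$. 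To maximize this effect, I would choose $\phi$ either as a Schwartz bump with $\hat\phi(0) \neq 0$ or, more flexibly, as a finite sum of modulated Schwartz bumps at frequencies $\{\xi_j, \eta_k\}$ satisfying the momentum--energy resonance condition $\sum_j \xi_j = \sum_k \eta_k$ and $\sum_j|\xi_j|^2 = \sum_k|\eta_k|^2$ (arising from the expansion $|u|^p u = u^{p/2 + 1}\bar u^{p/2}$), which ensures that the dominant contribution to $\mathcal{J}_L$ carries no net Schr\"odinger phase.

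The main obstacle is obtaining the sharp threshold $s_m$ rather than the weaker threshold $s_i = s_m - 2/p$ given by a straightforward scaling argument alone (which would use only $\|\mathcal{J}_L\|_{\dot H^s} = O(1)$). Closing this gap requires a careful analysis of the outer oscillatory phase $e^{i(\jb{\gamma} - 1)s|\xi|^2}$ in the $s$-integration of $\mathcal{J}_L$: the boundary terms from integration by parts in $s$, combined with the polynomially-growing inner integrand produced by the stationary-phase cancellation, must be shown to balance the $\lambda^{p(s_m - s) - 2}$ prefactor for every $s < s_m$. A judicious choice of the Fourier support of $\phi$ (ensuring the relevant $|\hat\phi|^p \hat\phi(\xi)$ factors do not vanish on the critical $\xi$-regime) should yield this balance and recover the full sub-critical range.
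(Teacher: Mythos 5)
Your scaling reduction is correct and, up to normalization, uses the same family of data as the paper: the paper takes $\psi_N = N^d e^{i(1\wedge T)\Delta}\phi(Nx)$ with $\widehat\phi$ supported on an annulus, computes $\|\psi_N\|_{H^\alpha}\sim N^{\alpha+d/2}$, and the exponent count it extracts is exactly your $\lambda^{p(s_m-s)-2}$ prefactor. So the theorem is indeed reduced to the lower bound $\|\mathcal J_{\lambda^2}(\lambda^2 t,\cdot\,;\phi)\|_{\dot H^s}\gtrsim \lambda^{2-p(s_m-s)}$, i.e.\ essentially $\lambda^2$ growth. The gap is that this bound is never established, and the mechanism you propose for it cannot deliver it. Dispersive decay gives $\| |e^{i\tau\Delta}\phi|^p e^{i\tau\Delta}\phi\|_{L^2}\sim \tau^{-pd/2}$, and since $p\ge 2$ and $d\ge 1$ force $pd/2\ge 1$, the "accumulating, non-oscillating $\sigma$-integrand" produced by your stationary-phase cancellation contributes at most $\int_1^L \tau^{-pd/2}\,d\tau = O(\log L)$, never $O(L)$. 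The factor $\lambda^2$ must instead come from the $s'$-integration over the interval of length $\lambda^2 t$ restricted to the resonant set (equivalently, from the anti-diagonal direction $s-\sigma$, along which the integrand is constant) — and that is precisely the step you defer to a future "careful analysis of the outer oscillatory phase \ldots must be shown to balance." As written, the proof's central quantitative claim is an unproved assertion, and the heuristic offered in its place points at the wrong direction of integration.

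The paper closes exactly this gap with a positivity trick that avoids all oscillatory-integral analysis: it bounds $\|\Xi_1(\psi_N)\|_{H^s}$ from below by duality against $e^{it\Delta}\psi_N$, so that the spatial pairing collapses to $\int |e^{i(s+\sigma)\Delta}\psi_N|^{p+2}\,dx \ge 0$ and every frequency interaction contributes with a favorable sign. The change of variables $\tau = s+\sigma$, $\rho = s-\sigma$ then exhibits the order-one measure of the degenerate $\rho$-direction (this is the factor $T\wedge 1$, i.e.\ your $\lambda^2$ after rescaling), while the $\tau$-integral of $\int|e^{i\tau\Delta}N^d\phi(Nx)|^{p+2}dx$ rescales to a fixed positive constant. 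If you want to salvage your route, you should replace the $\sigma$-accumulation heuristic by this pairing (or by an explicit lower bound on the resonant contribution to the $s'$-integral), and you should also place the data at frequency $\sim\lambda$ — e.g.\ via your modulated bumps or an annular Fourier support — since for $s<0$ (which occurs whenever $p<4/d$, so that $s_m<0$) an unmodulated bump with $\widehat\phi(0)\neq 0$ satisfies $\|\phi_\lambda\|_{H^s}\gg\|\phi_\lambda\|_{\dot H^s}$, which inflates the denominator $\|\phi_\lambda\|_{H^s}^{p+1}$ and destroys the exponent count in the inhomogeneous norm that the theorem is stated in.
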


\subsection{Integrated scaling pseudo-symmetry}
    If we alter \eqref{GT} so that the nonlinearity is integrated over $[0,\infty)$ (or $\R$), we recover a true scaling symmetry:
    \begin{equation}\label{intro/integrated}
        u(t,x) \;\longmapsto\; \lambda^{-\frac{4}{p}} u\Big(\frac{t}{\lambda^2}, \frac{x}{\lambda}\Big).
    \end{equation}
    We note that this altered model is natural to consider mathematically, but does not appear to arise in the study of dispersion-managed optical fibers; see Section~\ref{fibers}. When applied to solutions of \eqref{GT}, the rescaling \eqref{intro/integrated} will map solutions of \eqref{GT} to solutions of a \eqref{GT}-variant,
    \begin{equation*}
        iv_t + \jb{\gamma} \Delta v + \int_0^{\lambda^{2}} e^{-i\sigma \Delta} \Big[\big|e^{i\sigma \Delta} v\big|^p \cdot e^{i\sigma \Delta} v \Big]d\sigma = 0.
    \end{equation*}
    Notably, the region of integration is again altered, but the nonlinearity is no longer averaged.
    
    Associated to this pseudo-symmetry is a critical regularity\footnote{The subscript $i$ is used to indicate `integrated' for quantities associated to the integrated nonlinearity of \eqref{GT}.} $s_i = \frac{d}{2} - \frac{4}{p}$.
    Below this threshold, we conjecture that \eqref{GT} is ill-posed and exhibits norm inflation in $H^s$ for $s < s_i$. Specifically, we expect that for all $s < s_i$, the following occurs:
    \begin{definition}[Norm inflation in $H^s$]
        For all $\epsilon > 0$, there exists smooth initial data $u_0$ with corresponding solution $u(t)$ to \eqref{GT} that satisfies
        \begin{equation*}
            \|u_0\|_{\dot{H}^s} < \epsilon \qtq{with} \|u(T)\|_{\dot{H}^s} > \epsilon^{-1},\qtq{for some} |T| < \epsilon.
        \end{equation*}
    \end{definition}

    We prove norm inflation in $H^s$ and resolve this conjecture in two cases: the mass-subcritical regime $s < \min(0,s_i)$ and the energy-supercritical regime $1 \leq s < s_i$.
    
    In the mass-subcritical regime, we build on the norm inflation results for \eqref{NLS} based on the power-series representation of the data-to-solution map \cite{series-Oh, series-Christ,series-Kishimoto, series-Tao, series-Bourgain}. In particular, we expand the methods of Kishimoto \cite{series-Kishimoto} and Oh \cite{series-Oh}. For simplicity of exposition,  we consider only the one dimension cubic \eqref{GT} and show norm inflation in $H^s$ for $s < s_i = -\frac{3}{2}$:
    \begin{theorem}\label{intro/inflation-negative}
        For the one dimensional cubic \eqref{GT}, norm inflation occurs in $H^s$ for $s < s_i = -\frac{3}{2}$. 
        As a consequence, for any $T > 0$, the data-to-solution map of \eqref{GT} fails to be continuous at $u_0 = 0$ from $H^s \to C_t H_x^s((-T,T))$ for $s < s_i = -\frac{3}{2}$.
    \end{theorem}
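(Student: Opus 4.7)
The plan is to adapt the power-series approach to norm inflation pioneered by Bourgain \cite{series-Bourgain} and refined for the standard cubic NLS by Christ, Kishimoto, and Oh \cite{series-Christ, series-Kishimoto, series-Oh}. I would expand the solution as $u = \sum_{k\geq 0} u_{2k+1}$, with $u_1 = e^{it\jb{\gamma}\Delta}\phi$ the linear flow and $u_{2k+1}$ given by $k$ iterated applications of the Duhamel map against the \eqref{GT} nonlinearity. A direct computation yields the Fourier representation of the first nonlinear iterate:
\begin{equation*}
    \widehat{u_3}(t,\xi) = -i\,e^{-it\jb{\gamma} \xi^2} \int_{\xi_1 - \xi_2 + \xi_3 = \xi} \hat{\phi}(\xi_1)\,\overline{\hat{\phi}(\xi_2)}\,\hat{\phi}(\xi_3)\; G(\Phi)\, R(t,\Phi)\, d\xi_1\,d\xi_2,
\end{equation*}
where $\Phi = 2(\xi_2-\xi_1)(\xi_2-\xi_3)$ is the 1D cubic NLS resonance phase, $R(t,\Phi) = \int_0^t e^{is\jb{\gamma}\Phi}\,ds$ is the standard Duhamel kernel, and $G(\Phi) = \int_0^1 e^{i\sigma\Phi}\,d\sigma$ is the averaging kernel introduced by \eqref{GT}. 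The sole distinction from cubic NLS is the extra factor $G(\Phi)$, which supplies off-resonance decay $|G(\Phi)|\lesssim (1+|\Phi|)^{-1}$ but is of order one near the resonant set $\Phi = 0$.

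Second, I would construct initial data $\hat{\phi}_N = A\sum_j \mathbf{1}_{I_j}$, where the $I_j$ are small frequency intervals at a high scale $N$, placed in the manner of Kishimoto--Oh so that the trilinear combination $\xi = \xi_1 - \xi_2 + \xi_3$ with $\xi_\ell \in I_{j_\ell}$ lands in a much lower-frequency interval $I_0$ on a set of positive measure where $\Phi = 0$. The amplitude $A$ is tuned so that $\|\phi_N\|_{H^s} \sim \epsilon$. On the resonant set, both $G(\Phi)$ and $R(t,\Phi)/t$ are of order one, so the trilinear integral defining $\widehat{u_3}$ contributes as in the usual cubic NLS resonant argument, and the low-frequency output is amplified in $H^s$ by $M^s$ for $M \ll N$. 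Calibrating $A$, $N$, $t^*$, and the box geometry in terms of $\epsilon$ should then yield $\|u_3(t^*)\|_{H^s} > \epsilon^{-1}$ with $t^* < \epsilon$, precisely in the regime $s < s_i = -\tfrac{3}{2}$ dictated by the integrated scaling pseudo-symmetry \eqref{intro/integrated}.

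The remaining task is to bound the tail $\sum_{k\geq 2} u_{2k+1}(t^*)$ in $H^s$ and show it is dominated by $u_3(t^*)$. Each higher iterate is a $(2k+1)$-linear Fourier integral whose kernel is a product of $G$ and $R$ factors over nested resonance phases; the averaging factors $G$ supply decay on non-resonant configurations, and combined with the smallness of $A^2 t^*$ they yield a geometric bound dominated by $\|u_3(t^*)\|_{H^s}$. The main obstacle lies in the delicate parameter balancing: the extra $G(\Phi)$ both helps (suppressing off-resonance contributions in higher iterates) and complicates (restricting the effective set on which $u_3$ is large) the analysis, and the frequency-box configuration must be engineered so that resonance is captured on a sufficiently rich subset to drive the inflation at exactly the integrated-scaling threshold $s_i = -\tfrac{3}{2}$---strictly below the NLS threshold $s_m = -\tfrac{1}{2}$ at which the classical Kishimoto--Oh arguments succeed for the un-averaged nonlinearity.
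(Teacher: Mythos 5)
There is a genuine gap in the core mechanism. You propose to engineer the frequency boxes so that the high-to-low output $\xi=\xi_1-\xi_2+\xi_3\in I_0$ is attained \emph{on the resonant set} $\Phi=0$, where $G(\Phi)$ and $R(t,\Phi)/t$ are both of order one. For the one-dimensional cubic interaction this configuration does not exist: since $\Phi=2(\xi-\xi_1)(\xi-\xi_3)$, resonance forces $\xi=\xi_1$ or $\xi=\xi_3$, i.e.\ the output sits at an input frequency. To push energy from frequency $\sim N$ down to frequency $O(A)\ll N$ one is forced into the regime $\xi_1,\xi_3\sim N$, $\xi_2\sim 2N$, where $|\Phi|\sim N^2$ --- the cascade is necessarily \emph{non-resonant}. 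Moreover, even if your resonant configuration existed, with $G(\Phi)\sim 1$ the first iterate would be estimated exactly as for \eqref{NLS}, and the parameter balancing would reproduce the monomial threshold $s_m=-\tfrac12$ rather than $s_i=-\tfrac32$; your sketch never explains where the shift to $-\tfrac32$ actually comes from.

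The paper's proof embraces the non-resonance. With data $\widehat\phi=R(\1_{[N,N+A]}+\1_{[2N,2N+A]})$ and $t\ll N^{-2}$, the Duhamel factor still contributes $\gtrsim t$ (no cancellation has accumulated), while the averaging kernel is evaluated explicitly: its main term gives $|\Phi|^{-1}\sim N^{-2}$ and the oscillatory remainder is shown to be subordinate via Van der Corput. This yields $\|\Xi_1(\phi)(t)\|_{H^s}\gtrsim tN^{-2}A^2R^3$, a loss of $N^{-2}$ relative to NLS, which is then compensated by taking the amplitude much larger and the time much shorter than in the Kishimoto--Oh scheme ($R=N^{1+3\delta}$, $A=N^{1-\delta}$, $T=N^{-3-6\delta}$). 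The binding constraint becomes the local-theory requirement $TR^2A\ll1$ together with $\|\phi\|_{H^s}\sim N^sRA^{1/2}\ll1$, and it is this balance that produces $s<-\tfrac32$. The tail estimate is also more delicate than you indicate: one needs the frequency-localized inductive bound $|\widehat{\Xi_j}(t,\xi)|\leq t^jR^{2j+1}(\log A)^{2j}C^j(1+j)^{-2}\sum_{I\in\mathcal I_j}\1_I(\xi)$, using $1\wedge|\Phi|^{-1}$ from the $\sigma$-average and a combinatorial weight to control the number of interval sums. Your outline would need to be rebuilt around the non-resonant cascade and this quantitative loss/compensation to reach the claimed threshold.
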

    \noindent We expect that this argument extends naturally---with the methods in \cite{series-Oh}---to all \eqref{GT} models for $s < \min(s_i, 0)$. We do not pursue the details here.

    In contrast, for positive regularities, we must derive novel methods to show norm inflation. The standard method of proving norm inflation for \eqref{NLS}, due to Christ, Colliander, and Tao in \cite{CCT-ip-1,CCT-ip-2}, relies on the vanishing dispersion limit to yield approximate solutions to \eqref{NLS} with well-understood behaviors. For \eqref{GT}, this method appears intractable: In the vanishing dispersion limit of \eqref{GT}, approximate solutions are not known to exist. This is due to the nonlocality of the nonlinearity. One may remove the nonlocality of \eqref{GT} in the large scale limit, such as in the work \cite{GT-Jason-large-scale}, however this is at odds with the subsequent small scales needed for norm inflation.

    To surmount this, we instead build on structural identities of \eqref{GT}, namely the virial identity. This is originally due to Glassey \cite{NLS-virial-glassey} for \eqref{NLS} and first shown for \eqref{GT} by Choi, Hong, and Lee \cite{GT-dichotomy}. For the sake of completeness, we present and prove the virial identity for general $\jb{\gamma}$, $p$, and $d$. For simplicity, we restrict to Schwartz initial data; see \cite{GT-dichotomy} for a more precise statement of the necessary regularity and decay.
    \begin{proposition}[Virial identity]\label{intro/virial}
        Let $u$ be the maximal solution of \eqref{GT} for $p \geq \frac{8}{d}$, with initial data $u_0 \in \Schwartz(\R^d)$. Define the variance of $u$ as
        \begin{equation*}
            v(t) = \int_{\R^d} |x|^2 |u(t,x)|^2 dx.
        \end{equation*}
        
        In the defocusing case, $\jb{\gamma} = -1$, we find that for $-\frac{1}{2} \leq t \leq 0$,
        \begin{equation*}
            v(t) \leq v(0) - \dot{v}_1(0)t + C(p,d) E(u) t^2,
        \end{equation*}
        where $C(p,d) > 0$ for $p \geq \frac{8}{d}$. 
        
        In the focusing case, $\jb{\gamma} = 1$, we find that for $t < 0$,
        \begin{equation*}
            v(t) \leq v(0) + \dot{v}_1(0) t - 8E(u) t^2.
        \end{equation*}
        
        In both cases, $\dot{v}_1$ is given by and, for $t < 0$, satisfies
        \begin{equation*}
            \dot{v}_1(t) = \int\overline{u} (x \cdot \nabla u) dx \geq \dot{v}_1(0) - 16 E(u) t.
        \end{equation*}
    \end{proposition}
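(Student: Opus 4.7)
The plan is to follow the classical Glassey virial argument, adapted to the nonlocal nonlinearity of \eqref{GT}: differentiate $v(t)$ twice using the equation, derive a Pohozaev-type identity for $\ddot v(t)$, bound $\ddot v$ by the conserved energy, and integrate twice---once to produce the $\dot v_1$ bound and a second time to produce the quadratic bound on $v(t)$. The essential new ingredient is the handling of the nonlocal nonlinearity $N(u)=\int_0^1 e^{-i\sigma\Delta}[|v_\sigma|^p v_\sigma]\,d\sigma$ (where $v_\sigma:=e^{i\sigma\Delta}u$), for which the decisive tools are the Heisenberg conjugation formula $e^{-i\sigma\Delta}x_j e^{i\sigma\Delta} = x_j - 2i\sigma\,\partial_j$ together with the bilinear symmetry $(e^{i\sigma\Delta})^T = e^{i\sigma\Delta}$, which is valid because $\Delta$ is symmetric under the unconjugated pairing.

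To compute $\dot v = 2\Re\!\int|x|^2 \bar u\,u_t\,dx$ I would substitute the equation. The dispersive term integrates by parts to the standard Morawetz piece $4\jb{\gamma}\!\int x\cdot \Im(\bar u\nabla u)\,dx$. For the nonlocal contribution $-2\Im\!\int|x|^2 \bar u\,N(u)\,dx$ I would iterate Heisenberg to obtain
\[
|x|^2 e^{i\sigma\Delta} = e^{i\sigma\Delta}\bigl(|x|^2 - 4i\sigma\,x\cdot\nabla - 2i\sigma d - 4\sigma^2\Delta\bigr),
\]
substitute into $|x|^2 \bar u = |x|^2 e^{i\sigma\Delta}\bar v_\sigma$, and then use bilinear adjointness to cancel the $e^{i\sigma\Delta}$ against the $e^{-i\sigma\Delta}$ inside $N(u)$; the result is a $\sigma$-integrated expression purely local in $v_\sigma$, featuring the $L^{p+2}$ density and moment corrections. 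Differentiating once more, and applying the same commutation tools to the new derivative, produces $\ddot v$ in Pohozaev form,
\[
\ddot v = 8\jb{\gamma}\!\int|\nabla u|^2\,dx \;-\; \tfrac{4d}{p+2}\!\int_0^1\!\!\int|v_\sigma|^{p+2}\,dx\,d\sigma \;+\; \mathcal R(u),
\]
with $\mathcal R(u)$ collecting the remaining Heisenberg-commutator corrections.

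Using conservation of $E(u)$, I would then rewrite this as $\ddot v = -16 E(u) + \Phi(u)$, where $\Phi$ combines the surviving $\iint|v_\sigma|^{p+2}$ piece with $\mathcal R$. The hypothesis $p \geq \tfrac{8}{d}$---equivalently $s_i \geq 0$---is precisely what forces $\Phi$ to have a definite sign: in the focusing case $\jb{\gamma}=1$ one expects $\Phi \leq 0$, giving $\ddot v \leq -16 E(u)$ directly; in the defocusing case $\jb{\gamma}=-1$ one gets $|\Phi| \leq C(p,d) E(u)$ on the bounded interval $|t|\leq\tfrac12$, where the $\sigma$-averaging over $[0,1]$ stays temporally localized. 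Integrating the resulting pointwise bound on $\ddot v$ once from $0$ to $t<0$ yields $\dot v_1(t) \geq \dot v_1(0) - 16 E(u)\,t$, and integrating a second time produces the claimed quadratic bounds on $v(t)$. The main obstacle is the algebraic bookkeeping in the $\ddot v$ computation: the repeated Heisenberg expansions together with the $\sigma$-average generate many moment terms, and verifying that they collapse into $\Phi(u)$ with the stated sign exactly at the threshold $p\geq\tfrac{8}{d}$ will require a delicate cancellation reminiscent of, but more involved than, the Pohozaev identity for the monomial \eqref{NLS}.
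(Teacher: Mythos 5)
Your toolbox is right (the Heisenberg conjugation identities, integration by parts in $x$ and in $\sigma$, the Pohozaev structure), but the core strategy---forming the full second derivative $\ddot v$ and bounding it pointwise by $E(u)$---does not work for \eqref{GT}, and the paper's proof is structured precisely to avoid it. As you correctly note, the nonlocal nonlinearity contributes a genuinely nonzero term to the \emph{first} derivative: $\dot v = \jb{\gamma}\dot v_1 + \dot v_2$, where $\dot v_1$ is the quadratic momentum functional and $\dot v_2$ is an order-$(p+2)$ functional built from $\iint \sigma |v_\sigma|^{p+2}$ and a $\sigma=1$ boundary term. Differentiating $\dot v_2$ in time, as your plan requires, reinserts the equation into these $L^{p+2}$ densities and produces terms of order $2p+2$ in $u$ (from the nonlinear part of $u_t$) as well as gradient-weighted $L^{p+2}$ terms (from the dispersive part); for \eqref{NLS} the analogous contributions vanish by locality, but here they do not, they carry no sign, and they are not controlled by the conserved energy. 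The paper instead differentiates only $\dot v_1$ a second time (obtaining $\ddot v_1 = -16E(u) + \cdots$) and integrates $\dot v_2$ directly once, exploiting its explicit sign. This asymmetry is also why the proposition bounds $\dot v_1(t)$ rather than $\dot v(t)$; your final step conflates the two.

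A second, related gap concerns the boundary term. Integrating the $4\sigma^2\partial_\sigma$ piece of the Heisenberg expansion by parts in $\sigma$ produces $\int |e^{i\Delta}u|^{p+2}\,dx$, a single time-slice that is \emph{not} dominated by the $\sigma$-averaged potential energy appearing in $E(u)$, so no pointwise bound $|\Phi|\leq C(p,d)E(u)$ is available in the defocusing case. In the paper this term enters $\ddot v_1$ with an unfavorable sign and is cancelled only against the matching term in $\dot v_2$, and because one term is integrated once in time while the other is integrated twice, the cancellation survives only with a prefactor $1+2t\geq 0$; this is the actual origin of the restriction $-\tfrac12\leq t\leq 0$, not any temporal localization of the $\sigma$-average. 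Your focusing-case outline is closer to viable (there one may simply discard $\dot v_2\geq 0$ and use $\ddot v_1\leq -16E(u)$ for $p\geq \tfrac{8}{d}$), but as written the proposal would not close in the defocusing case.
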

    
    In the defocusing case, $\jb{\gamma} = -1$, we use this identity to prove that suitable solutions undergo {\em energy equipartition}, a rapid renormalization of kinetic energy and potential energy:
    \begin{proposition}[Energy equipartition]\label{intro/energy-equipartition}
        Suppose that $u_0$ is real-valued and sufficiently regular to justify the virial identity \eqref{virial}. Then for $T^2 \sim E(u_0)/\int |x u_0|^2 dx \ll 1$, $T < 0$, the corresponding solution $u$ to \eqref{GT} for $\jb{\gamma} = -1$ satisfies
        \begin{equation*}
            \|u(T)\|_{\dot{H}_x^1}^2 \gtrsim \|e^{i\sigma \Delta} u(T)\|^{p+2}_{L^{p+2}_{\sigma, x}([0,1])}.
        \end{equation*}
        In other words, should $\|u_0\|^2_{H^1} \ll \|e^{i\sigma \Delta} u_0\|^{p+2}_{L^{p+2}_{\sigma, x}([0,1])}$, then the kinetic and potential energy of $u$ become comparable by time $T$.
    \end{proposition}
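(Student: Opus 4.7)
The plan is to combine the virial identity from Proposition~\ref{intro/virial} with the Heisenberg uncertainty principle and energy conservation to extract the equipartition. The first move is to note that energy conservation for \eqref{GT} with $\jb{\gamma}=-1$ gives
\begin{equation*}
    \tfrac{1}{2}\|u(T)\|_{\dot{H}^1}^2 + \tfrac{1}{p+2}\|e^{i\sigma\Delta} u(T)\|_{L^{p+2}_{\sigma,x}([0,1])}^{p+2} = E(u_0),
\end{equation*}
so the conclusion is equivalent to the kinetic lower bound $\|u(T)\|_{\dot{H}^1}^2 \gtrsim E(u_0)$. This reduction is important because the statement is structurally a dichotomy for how the conserved energy is distributed at the endpoint time $T$, and we only have access to bounds on $u(T)$ through virial-type functionals.

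To produce this kinetic lower bound, I would apply Cauchy--Schwarz to the functional $\dot{v}_1(T) = \int \overline{u(T)}\,(x \cdot \nabla u(T))\,dx$, yielding $|\dot{v}_1(T)|^2 \leq v(T)\|\nabla u(T)\|_{L^2}^2$, so
\begin{equation*}
    \|\nabla u(T)\|_{L^2}^2 \geq \frac{|\dot{v}_1(T)|^2}{v(T)}.
\end{equation*}
The numerator is controlled from below by the virial lower bound $\dot{v}_1(T) \geq \dot{v}_1(0) - 16 E(u_0) T$. For real-valued $u_0$, an integration by parts gives $\dot{v}_1(0) = -\tfrac{d}{2}\|u_0\|_{L^2}^2$, which is real; thus for $T<0$ the virial forces $\dot{v}_1(T)$ to grow linearly in $|T|$ beyond its (negative) initial value. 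The denominator is controlled by the complementary virial upper bound $v(T) \leq v(0) - \dot{v}_1(0) T + C(p,d) E(u_0) T^2$, which at the chosen scale $T^2 \sim E(u_0)/v(0) \ll 1$ keeps $v(T) \lesssim v(0)$.

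Combining these bounds, and invoking Heisenberg's uncertainty principle $\|u_0\|_{L^2}^4 \leq C_u\, v(0) \|\nabla u_0\|_{L^2}^2$ together with the hypothesis that the initial kinetic energy is dominated by the potential energy, the goal is to unwind the estimate into $\|\nabla u(T)\|_{L^2}^2 \gtrsim E(u_0)$. The main obstacle is quantitative: the virial corrections---linear in $T$ via $\dot{v}_1(0)$, quadratic in $T$ via $E(u_0)T^2$---must conspire with the specific scaling $T^2 \sim E(u_0)/v(0)$ to produce a lower bound exactly of order $E(u_0)$, rather than some subordinate power. The real-valuedness of $u_0$, which fixes the sign of $\dot{v}_1(0)$ and ties it cleanly to $\|u_0\|_{L^2}^2$, is essential for this calibration; without it, the linear correction in the virial could have arbitrary phase and destroy the constructive interference that forces the equipartition.
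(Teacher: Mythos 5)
Your outline is essentially the paper's proof: lower-bound $\dot v_1(T)$ by the virial inequality $\dot v_1(T)\ge\dot v_1(0)-16E(u)T$, upper-bound it by Cauchy--Schwarz as $\sqrt{v(T)}\,\|u(T)\|_{\dot H^1}$, control $v(T)$ by the virial upper bound, and evaluate at the equalization time; the final reduction via conservation of energy (kinetic $\gtrsim E$ implies kinetic $\gtrsim$ potential, since both are nonnegative for $\jb{\gamma}=-1$) is also how the paper closes. Two calibration points need fixing. First, and most importantly, the time scale: with $T^2\sim E(u_0)/v(0)$ your chain of estimates gives
\begin{equation*}
\|u(T)\|_{\dot H^1}^2\;\gtrsim\;\frac{E(u)^2T^2}{v(0)+E(u)T^2}\;=\;\frac{E(u)^3}{v(0)^2+E(u)^2},
\end{equation*}
which is $\gtrsim E(u)$ only when $E(u)\gtrsim v(0)$; likewise your claim that $v(T)\lesssim v(0)$ at that scale requires $E(u)T^2\lesssim v(0)$, i.e.\ $E(u)\lesssim v(0)$. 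The equalization time must be the reciprocal, $T^2\sim v(0)/E(u_0)$, at which $E(u)T^2\sim v(0)$ and the quotient becomes $\sim E(u)$. (The displayed scale in the proposition statement is inverted relative to what the argument, and the paper's own proof, actually uses; at the correct scale your estimates close.)

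Second, the quantity $\dot v_1$ arising from $\tfrac{d}{dt}\int|x|^2|u|^2$ is $4\Im\int\overline{u}\,(x\cdot\nabla u)\,dx$, not $\int\overline{u}\,(x\cdot\nabla u)\,dx$; it is the imaginary part that appears, which is why real-valued $u_0$ gives $\dot v_1(0)=0$ exactly --- this is the role of the real-valuedness hypothesis, rather than fixing a sign. Your value $-\tfrac d2\|u_0\|_{L^2}^2$ is the real part. That said, your Heisenberg patch is not wrong: since $|\dot v_1(0)|\le 4\sqrt{v(0)}\,\|\nabla u_0\|_{L^2}\le 4\sqrt{2\,v(0)E(u_0)}\sim E(u_0)|T|$ at the correct time scale (with constant strictly below $16$), the linear term $16E(u)|T|$ still dominates, so your version survives and in fact shows the real-valuedness is only a normalization of $T$, consistent with the paper's footnote. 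With the time scale corrected, the proposal is a valid proof.
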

    
    By constructing initial data $u_0$ which satisfies 
    \begin{equation*}
        \|u_0\|_{H^1} \ll 1, \quad \|e^{i\sigma \Delta} u_0\|^{p+2}_{L^{p+2}_{\sigma, x}([0,1])} \gg 1, \qtq{and}\frac{E(u_0)}{\int |x u_0|^2 dx} \ll 1,
    \end{equation*}
    the preceding proposition immediately implies a rapid growth of the $H^1$ norm and hence norm inflation in $H^s$ for $s \geq 1$.

    In the focusing case, $\jb{\gamma} = 1$, we instead follow the work of Choi, Hong, and Lee \cite{GT-dichotomy}, which showed finite time blowup. Together, these methods yield the follow theorem:
    \begin{theorem}\label{intro/inflation-energy}
        Suppose that $s_i > 1$. Then for $1 \leq s < s_i$, norm inflation occurs in $H^s$ for \eqref{GT}.
        As a consequence, for any $T > 0$, the data-to-solution map of \eqref{GT} fails to be continuous at $u_0 = 0$ from $H^s \to C_t H_x^s((-T,T))$ for $1 \leq s < s_i$.
    \end{theorem}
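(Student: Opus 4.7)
The plan is to construct, for each $s$ with $1\le s<s_i$, initial data $u_0$ to which Proposition~\ref{intro/energy-equipartition} applies (in the defocusing case) or the virial-based blowup argument of \cite{GT-dichotomy} applies (in the focusing case), then lift the resulting $\dot{H}^1$ bounds on $u(T)$ to $\dot{H}^s$ bounds via the Gagliardo--Nirenberg interpolation
\[
\|u(T)\|_{\dot{H}^s}\;\geq\;\|u(T)\|_{\dot{H}^1}^{s}\,\|u(T)\|_{L^2}^{-(s-1)},
\]
combined with the mass conservation for \eqref{GT}.

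My candidate initial data is a translated Schwartz bump $u_0(x)=N^{\alpha}\phi(N(x-x_0))$ with $\phi\in\Schwartz(\R^d)$ real-valued and nonzero, $N$ large, and $L=|x_0|$ a large parameter chosen below. Direct scaling gives
\[
\|u_0\|_{\dot{H}^s}\sim N^{\alpha+s-d/2},\qquad \|u_0\|_{L^2}\sim N^{\alpha-d/2},\qquad \int|xu_0|^2\,dx\sim L^{2}\,N^{2\alpha-d},
\]
while the changes of variable $y=Nx$ and $\tau=N^{2}\sigma$, together with the decay $\|e^{i\tau\Delta}\phi\|_{L^{p+2}}^{p+2}\lesssim \tau^{-dp/2}$ (integrable at infinity since $dp>2$), yield $\|e^{i\sigma\Delta}u_0\|_{L^{p+2}_{\sigma,x}([0,1]\times\R^d)}^{p+2}\sim N^{\alpha(p+2)-d-2}$. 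Hence $E(u_0)\sim N^{\alpha(p+2)-d-2}$ once $\alpha p>4$ (the condition for potential to dominate kinetic), and the candidate equipartition time satisfies $T^{2}\sim N^{\alpha p-2}/L^{2}$.

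The algebraic crux is that the window $\alpha\in\bigl((2s+d)/(sp+2),\,d/2-s\bigr)$ is nonempty precisely when $s<s_i$, and that its lower endpoint already forces $\alpha p>4$. Fixing any such $\alpha$ and then $L\gg N^{(\alpha p-2)/2}$, the data $u_0$ satisfies $\|u_0\|_{\dot{H}^s}\to 0$ and $|T|\to 0$ as $N\to\infty$. Applying Proposition~\ref{intro/energy-equipartition} and invoking energy conservation yields $\|u(T)\|_{\dot{H}^1}^{2}\sim E(u_0)\sim N^{\alpha(p+2)-d-2}$; plugging this together with $\|u(T)\|_{L^2}\sim N^{\alpha-d/2}$ into the Gagliardo--Nirenberg bound above gives
\[
\|u(T)\|_{\dot{H}^s}\;\gtrsim\;N^{(\alpha(sp+2)-2s-d)/2}\;\longrightarrow\;\infty,
\]
with the exponent positive precisely by the choice of $\alpha$; this closes the defocusing case.

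For the focusing case $\jb{\gamma}=1$, I would follow \cite{GT-dichotomy}: using the same bump with $x_0=0$ and $\alpha p>4$, one has $E(u_0)>0$, and Proposition~\ref{intro/virial} forces $v$ to vanish at some blowup time with $|T^\ast|\lesssim\sqrt{v(0)/E(u_0)}\sim N^{-\alpha p/2}\to 0$; Heisenberg's inequality $\|u(T)\|_{\dot{H}^1}^{2}\,v(T)\gtrsim \|u_0\|_{L^{2}}^{4}$ then drives $\|u(T)\|_{\dot{H}^1}\to\infty$ as $T\nearrow T^\ast$, and Gagliardo--Nirenberg again promotes this to $\dot{H}^s$-inflation. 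The main obstacle I anticipate is exponent bookkeeping---namely, verifying the equivalence $s<s_i\iff(2s+d)/(sp+2)<d/2-s$ and the implication $\alpha>(2s+d)/(sp+2)\Rightarrow \alpha p>4$ in the full range $s\le s_i$---together with certifying the required regularity and decay on $u_0$ so that Propositions~\ref{intro/virial} and~\ref{intro/energy-equipartition} are genuinely in force.
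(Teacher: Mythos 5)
Your construction follows the paper's strategy almost exactly---virial identity, energy equipartition, then interpolation of $\dot H^s$ between $\dot H^1$ and the conserved mass---and your exponent bookkeeping is correct where it matters: the window $\alpha\in\bigl((2s+d)/(sp+2),\,d/2-s\bigr)$ is indeed nonempty iff $s<s_i$, its lower endpoint does force $\alpha p>4$, and the crucial extra factor of $N^{-2}$ in the time-averaged potential energy (the paper's $\sigma^2$ gain, which is what lowers the threshold from $s_m$ to $s_i$) is correctly captured by your substitution $\tau=N^2\sigma$. The paper uses a Gaussian $Ae^{-|x|^2/4\sigma^2}$ with amplitude and width as independent parameters rather than your one-parameter concentrating bump, but that is a cosmetic difference.

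The genuine problem is your handling of the equipartition time and the translation parameter $L$. You take $T^2\sim E(u_0)/v(0)$, matching the literal statement of Proposition~\ref{intro/energy-equipartition}, but that statement is inverted relative to what the mechanism requires: the bound \eqref{inflation/equipartition}, $\|u(t)\|_{\dot H^1}^2\gtrsim E^2t^2/(v(0)+Et^2)$, only yields $\|u(T)\|_{\dot H^1}^2\gtrsim E$ once $Et^2\gtrsim v(0)$, i.e.\ $T^2\sim v(0)/E(u_0)$ (this is what the paper's own proof of the theorem uses). With the correct formula, your data give $T^2\sim L^2N^{2-\alpha p}$, so your choice $L\gg N^{(\alpha p-2)/2}$ produces $|T|\gg 1$: this violates the instantaneity requirement $|T|<\epsilon$ in the definition of norm inflation \emph{and} leaves the window $-\tfrac12\le t\le 0$ on which the defocusing virial inequality of Proposition~\ref{intro/virial} is established, so the argument as written does not close. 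The repair is immediate: drop the translation entirely (take $x_0=0$), so that $v(0)\sim N^{2\alpha-d-2}$ and $T^2\sim N^{-\alpha p}\to 0$; nothing else in your computation changes. Two smaller points: equipartition gives $\|u(T)\|_{\dot H^1}^2\gtrsim E(u_0)$, not $\sim$ (which is all you need); and in the focusing case your Heisenberg route is a legitimate variant of the paper's, which instead derives the divergence of $\|u(t)\|_{\dot H^1}$ directly from the lower bound \eqref{negative/focusing} on $\dot v_1$.
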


    \subsection{Connection to fiber-optics}\label{fibers}
        Accounting for the dominant effects, the standard model for the evolution of a pulse in an optical fiber is the cubic nonlinear Schr\"odinger equation:
\begin{equation}\label{NLS}\tag{NLS}
    i\partial_t \psi + \gamma \Delta\psi + |\psi|^2 \psi = 0, \quad \psi(0,x) = \psi_0(x), \quad \psi : \R_t \times \R_x \to \C,
\end{equation}
where $\gamma$ is the group velocity dispersion of the fiber (here just called dispersion), and $\psi$ is the complex modulation of a quasi-monochromatic carrier wave. The cubic, or {\em Kerr}, nonlinearity is typically dominant and arises from pulses polarizing the material they propagate within.
We note that the role of $t$ and $x$ are flipped from expectation: $t$ represents the {\em distance} along the optical fiber while $x$ represents a {\em retarded time}, traveling with the carrier wave.

In a typical optical fiber, dispersion dominates the nonlinear effects and causes pulses to broaden. This limits bandwidth as pulses overlap and interact. A common technique to mitigate these effects, introduced in \cite{origin-linear-dispersion-management}, is dispersion-management: concatenating fiber segments with opposite dispersion so that higher-frequency components propagate faster in one section and slower in the next. 
Mathematically, this corresponds to allowing $\gamma = \gamma(t)$ in \eqref{NLS} to alternate between positive and negative along the fiber (i.e. in $t$). This results in the so-called {\em dispersion-managed nonlinear Schr\"odinger equation}.

Strong dispersion-management takes this to the extreme, alternating quickly between extreme positive dispersion and extreme negative. This achieves a low (but nonzero) average dispersion with high local dispersion. Mathematically, this corresponds to $$\gamma(t) = \langle \gamma \rangle + \epsilon^{-1} \gamma_0(t/\epsilon),$$ where $\langle\gamma\rangle$ is the net dispersion and $\gamma_0$ is a periodic function with mean $0$. Crucially, this results in a fiber where dispersion and nonlinear effects are balanced, allowing for stable, soliton-like pulses. 

In the limit as $\epsilon \to 0$, with a suitable change of variables, the one-dimensional cubic Gabitov--Turitsyn equation ($d = 1, p = 2$) emerges as the natural model of the large-scale dynamics:
\begin{equation}\label{fibers/GT}
        iu_t + \jb{\gamma} \Delta u + \int_\R e^{-i\sigma \Delta} \Big[\big|e^{i\sigma \Delta} u\big|^2 \cdot e^{i\sigma \Delta} u \Big]d\mu(\sigma) = 0,
\end{equation}
where $\mu$ is a probability measure, dictated by fiber properties and the fine details of the dispersion management.
See \cite{GT-justification-1,GT-justification-2,GT-justification-3, GT-justification-4, solitons-zero-dispersion-reason} for the numerical and rigorous justification of this limiting process and see \cite{T-averaging} for the general Hamiltonian process. We also direct the reader to \cite{solitons-positive-dispersion} for a nice mathematical exposition to \eqref{fibers/GT} and discussion of the measure $\mu$.

To recover the specific form of \eqref{GT} used in this paper, a model case with $\gamma_0 = \1_{[0,1)} - \1_{[1,2)}$ (extended periodically) is considered. Other choices of $\gamma_0$ give different probability measures $\mu$ in the nonlinearity of \eqref{fibers/GT}. Indeed, one can even consider fiber loss and the correcting amplification by generalizing $\mu$; see \cite{GT-with-amplification}.

It is important to note that the techniques used in fiber optics have experienced dramatic twists in recent history. Dispersion-management alone underwent a number of iterations---such as focusing on vanishing net dispersion---before strong dispersion management was adopted; see \cite{DCF-history,DSP-DCF-comparison}. We direct the interested reader to \cite[\S2.1]{dispersion-management-survey} for a thorough history and explanation of various methods. 

In recent years, {\em digital signal processing} (DSP), enabled by coherent detection, has emerged as a competitor to dispersion-management. DSP sidesteps physical compensation by propagating pulses unmanaged and digitally reversing ill-effects at the receiver. This hinges on the long-time dynamics of \eqref{NLS}.
However, as the scale and rate of information transfer increases, DSP has its own challenges (see \cite{DSP-concerns-Turitsyn}). Recent research and industry perspectives have proposed hybrid dispersion-management and DSP approaches; see \cite{DSP-DM-combo-2023,blog-post}. Indeed, many transoceanic systems will continue to rely on dispersion-management due to the cost and time associated with changing to a pure DSP system.

We direct the interested reader to \cite{Biswas, Fibich} for nice textbook derivations of \eqref{NLS} and \eqref{fibers/GT}.
    
    \subsection*{Acknowledgements}
    The author was supported, in part, by NSF grants DMS-2154022, DMS-2452346, and DMS-2348018. 
    The author is grateful to Rowan Killip and Monica Vi\c{s}an for their confidence and guidance. The author is also grateful to Jason Murphy for the introduction to, and discussions on, these interesting models.

    \subsection*{Notation}
        We use the notation $A \lesssim B$ to indicate that $A \leq C B$ for some universal constant $C > 0$ that will change from line to line. If the implied constant is very small, then we use the notation $A \ll B$. If both $A \lesssim B$ and $B \lesssim A$ then we use the notation $A \sim B$. 
When the implied constant fails to be universal, the relevant dependencies will be indicated within the text or included as subscripts on the symbol.

For compactness of notation, we denote the maximum and minimum of two numbers $a$ and $b$ as $a \vee b$ and $a \wedge b$ respectively. For the sake of the reader's intuition, this aligns with the union and intersection of sets; that is to say
\begin{align*}
    (-\infty, a] \cup (-\infty, b] = (-\infty, \max(a,b)] = (-\infty, a \vee b], \\
    (-\infty, a] \cap (-\infty, b] = (-\infty, \min(a,b)] = (-\infty, a \wedge b].
    \end{align*}

Our conventions for the Fourier transform are
\begin{equation*}
\widehat{f}(\xi) = \tfrac{1}{\sqrt{2\pi}}\int e^{-i \xi x} f(x) dx \quad \text{so} \quad f(x) = \tfrac{1}{\sqrt{2\pi}} \int e^{i \xi x} \widehat{f}(\xi) d\xi.
\end{equation*}
This Fourier transform is unitary on $L^2$ and yields the standard Plancherel identities.
When a function $f(t,x)$ depends on both time and space, we let $\widehat{f}(t,\xi)$ denote the Fourier transform of $f$ in only the spatial variable.

For $s > -\frac{d}{2}$, we define the homogeneous Sobolev space $\dot{H}^s$ as the completion of the Schwartz functions $\Schwartz(\R^d)$ with respect to the norm
\begin{equation*}
    \|f\|^2_{\dot{H}^s} = \int_{\R^d} |\xi|^{2s} |\widehat{f}(\xi)|^2 d\xi.
\end{equation*}
For $s \in \R$, we similarly define the inhomogeneous Sobolev space $H^s$ as the completion of the Schwartz functions $\Schwartz(\R^d)$ with respect to the norm
\begin{equation*}
    \|f\|^2_{H^s} = \int_{\R^d} (1 + |\xi|^2)^s |\widehat{f}(\xi)|^d d\xi.
\end{equation*}

We use $L_t^p L_x^q(T \times X)$ to denote the mixed Lebesgue spacetime norm
\begin{equation*}
    \|f\|_{L_t^p L_x^q(T\times X)} = \big\| \|f(t,x)\|_{L^q(X,dx)} \big\|_{L^p(T,dt)} = \bigg[ \int_T \bigg(\int_X |f(t,x)|^q dx\bigg)^{p/q} dt\bigg]^{1/p}.
\end{equation*}
When $p = q$, we let $L^p_{t,x} = L_t^p L_x^p$. When $X = \R^d$, we let $L_t^pL_x^q(T) = L_t^pL_x^q(T\times\R^d)$. 
As an extension of this notation, we use $L_\sigma^r L_t^p L_x^q(I \times T \times X)$ to denote the mixed norm
\begin{equation*}
    \|f(\sigma, t, x)\|_{L_\sigma^r L_t^p L_x^q(I \times T \times X)} = \Big\| \big\| \| f(\sigma, t,x)\|_{L_x^q(X,dx)} \big\|_{L_t^p(T,dt)}\Big\|_{L_\sigma^q(I)}.
\end{equation*}
When $I = [0,1]$ and $X = \R^d$, we let $L_\sigma^r L_t^p L_x^q(T) = L_\sigma^r L_t^p L_x^q(I \times T \times \R^d)$. We extend this notation to Sobolev spaces in the obvious way.

For our purposes, the spatial norm will always be taken over $X = \R^d$.

\section{Preliminaries and linear theory}
    It will be useful throughout the paper to recall a notion of Schr\"odinger-admissible pairs:
\begin{definition}[Schr\"odinger-admissible]
    Fix a spatial dimension $d \geq 1$. We say that a pair $(p,q)$ is \emph{Schr\"odinger-admissible} if
    \begin{equation*}\label{SAP}
        2 \leq p, q \leq \infty, \quad \tfrac{2}{p} + \tfrac{d}{q} = \tfrac{d}{2}, \qtq{and} (p,q,d) \neq (2,\infty,2).
    \end{equation*}
\end{definition}

To adapt to the unique structure of \eqref{GT}, we recall the so-called shifted Strichartz estimates, original appearing in \cite{GT-Jason-small-large-scattering} and inspired by the work in \cite{Kawakami}:
\begin{proposition}[Shifted Strichartz estimates]\label{prel/shifted-Strichartz}
    Let $(p_i,q_i)$ for $i \in \{1,2\}$ be Schr\"odinger admissible pairs with $p_i \neq 2$. Then for any time interval $I \ni 0$, we find
    \begin{equation*}
        \bigg\|\int_0^t e^{i(t-s+\theta - \sigma) \Delta} F(\sigma,s) ds\bigg\|_{L_t^{p_1} L_x^{q_1}(I)} \lesssim_{p,q} \| F(\sigma, t)\|_{L_t^{p_2'} L_x^{q_2'}(I)}.
    \end{equation*}
\end{proposition}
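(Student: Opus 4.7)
The plan is to reduce the shifted estimate to the standard (non-shifted) Strichartz estimate by factoring the shift $e^{i(\theta-\sigma)\Delta}$ through $L^2_x$, and then to absorb the $[0,t]$ truncation via the Christ--Kiselev lemma. The essential observation is that although $e^{i(\theta-\sigma)\Delta}$ is unbounded on $L^{q}_x$ for $q \neq 2$, it is an isometry on $L^2_x$, so routing the argument through a mass bound sidesteps the shift entirely and automatically yields a constant independent of $\theta$ and $\sigma$.

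Concretely, set $\alpha = \theta - \sigma$ and first consider the non-truncated analog in which $\int_0^t$ is replaced by $\int_I$. Since the propagators commute,
\begin{equation*}
\int_I e^{i(t - s + \alpha)\Delta} F(\sigma, s) \, ds \;=\; e^{it\Delta}\, e^{i\alpha \Delta}\int_I e^{-is\Delta} F(\sigma, s) \, ds.
\end{equation*}
The dual of the homogeneous Strichartz estimate at the $(p_2, q_2)$ pair gives
\begin{equation*}
\Big\|\int_I e^{-is\Delta} F(\sigma, s) \, ds\Big\|_{L^2_x} \lesssim_{p_2,q_2} \|F(\sigma, \cdot)\|_{L_t^{p_2'}L_x^{q_2'}(I)},
\end{equation*}
and because $e^{i\alpha \Delta}$ preserves the $L^2_x$ norm, the standard homogeneous Strichartz estimate for $e^{it\Delta}$ at the $(p_1, q_1)$ pair yields the non-truncated bound with a constant independent of $\alpha$.

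Finally, I promote the non-truncated inequality to the retarded version by the mixed-norm Christ--Kiselev lemma, whose hypothesis is the strict separation $p_2' < p_1$. This separation is automatic from Schr\"odinger admissibility together with the standing assumption $p_i \neq 2$: these together force $p_2' < 2 < p_1$. The resulting implicit constant is a universal multiple of the non-truncated one, so uniformity in $\theta$ and $\sigma$ is preserved.

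I do not anticipate a genuine obstacle here; the only point requiring any care is the invocation of Christ--Kiselev for Bochner-valued (mixed-norm) integrands, which is standard in the Strichartz literature but worth citing precisely. The whole structure of the argument is a testament to how the nonlocal phase $e^{-i\sigma\Delta}\big[\cdots e^{i\sigma\Delta}u\big]$ in \eqref{GT} interacts harmlessly with the $L^2_x$-based dispersive theory, which is precisely what makes the averaged nonlinearity amenable to Strichartz methods in the first place.
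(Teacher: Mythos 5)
Your argument is correct and is precisely the ``standard argument via duality'' that the paper itself invokes without writing out (the paper defers the proof to the references): factor the shift $e^{i(\theta-\sigma)\Delta}$ through the $L^2_x$ pivot where it is an isometry, apply the homogeneous estimate and its dual, and recover the retarded version via Christ--Kiselev, which applies since $p_2' < 2 < p_1$ under the hypothesis $p_i \neq 2$. No gaps; the uniformity in $\theta$ and $\sigma$ comes out exactly as you say.
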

We note that in \cite{GT-Jason-small-large-scattering}, these estimates are only presented in the symmetric $p_1 = p_2$ case and are further presented with Lorentz spaces included. The (non-Lorentz) generalization to $p_1 \neq p_2$ follows the standard argument via duality with only minor changes. For the sake of brevity, we do not present a proof here; instead, we direct the reader to \cite{NLS-clay-lecture} and references therein for a concise presentation of the usual argument.

\section{Analytic local well-posedness}
    In this section, we prove Theorem~\ref{intro/lwp} and show that \eqref{GT} is locally well-posed with a data-to-solution map that is real analytic. To make this analytic dependence explicit, and to align with our ill-posedness results, we present the proof via power-series methods. These methods align closely with \cite[\S 3]{series-Tao}; see also \cite{series-Oh,series-Christ} for an elegant presentation via $(p+1)$-ary trees. As this is a standard argument, we focus on the quantitative well-posedness estimates and keep the details brief.

\subsection{Definition of the power series}
    We begin by briefly defining the power-series expansion of the data-to-solution map. Recall the Duhamel formula for a solution of \eqref{GT}:
    \begin{equation*}
        u(t) = e^{it\jb{\gamma}\Delta} u_0 + i \int_0^t \int_0^1 e^{i\jb{\gamma}t\Delta-i(s+\sigma)\Delta}\Big[\big|e^{i\sigma \Delta} u(s)\big|^p \cdot e^{i\sigma \Delta} u(s) \Big] d\sigma ds.
    \end{equation*}
    Fix some $R > 0$ and some $T = T(R)$ to be chosen later. Let $D = B_R(\dot{H}^{s_m \vee 0})$ denote the space of initial data and $S = C_t \dot{H}_x^{s_m \vee 0}((-T,T) \times \R^d)$ denote the space of solutions, aligning with the desired spaces for Theorem~\ref{intro/lwp}.
    
    We decompose the Duhamel formula into the linear term $L : D \to S$ and the nonlinear correction $N_p : S^{p+1} \to S$, which we will show are well-defined in Proposition~\ref{lwp/quantitative}. Specifically,
    \begin{align*}
        Lf & = e^{it \jb{\gamma}\Delta} f, \\
        N_p(f_0,\cdots, f_p) & = i \int_0^t \int_0^1 e^{i\jb{\gamma}(t-s)\Delta - i\sigma\Delta}\Big[e^{i\sigma \Delta} f_0(s)\cdot\overline{e^{i\sigma \Delta} f_1(s)} \cdot \ldots \cdot e^{i\sigma \Delta} f_p(s)\Big] d\sigma ds.
    \end{align*}
    Note that $N_p$ is a $(p+1)$-linear operator.\footnote{We take this to mean that it is linear {\em or conjugate linear} in each argument.} With this notation, a solution $u$ of \eqref{GT} with initial data $u_0$ satisfies
    \begin{equation*}
        u = L u_0 + N_p(u, \cdots, u).
    \end{equation*}
    Ouroborically substituting this formula into itself, we find the formal expansion
    \begin{equation}\label{lwp/expansion}
        u = L u_0 + N_p(Lu_0, \cdots, Lu_0) + N_p\big[Lu_0,\cdots,Lu_0,N_p(Lu_0,\cdots,Lu_0)\big] + \cdots.
    \end{equation}
    
    Grouping terms of equal order, we then recursively define
    \begin{equation}\label{lwp/definition}\begin{split}
        \Xi_0(u_0) & = L(u_0), \\
        \Xi_j(u_0) & = \sum_{\substack{j_0,\dots,j_{p} \geq 0 \\ j_0 + \dots + j_{p} = j - 1}} N_p(\Xi_{j_0}(u_0), \dots, \Xi_{j_{p}}(u_0)). 
    \end{split}\end{equation}
    We note here that $\Xi_j$ is a $(jp + 1)$-linear operator; the index $j$ is used to indicate the total `depth' of the recursion, how many copies of $N_p$ appear within any individual summand.
    This implies that $u$ has the following (formal) power series expansion
    \begin{equation}\label{lwp/power-series}
        u = \sum_{j \geq 0} \Xi_j(u_0).
    \end{equation}
    
    Should this power series converge absolutely in $S$, standard arguments then imply that $u$ solves \eqref{GT} in a strong sense and that the data-to-solution map is analytic $D \to S$.
\subsection{Quantitative local well-posedness}
To formalize this, we show that \eqref{lwp/power-series} converges absolutely in $S$. To do so, we prove a quantitative well-posedness in the sense of \cite{series-Tao}. From \cite{series-Tao}, these estimates are sufficient to conclude local well-posedness; we present an overview of the logic following the proposition.
\begin{proposition}[Quantitative well-posedness]\label{lwp/quantitative}
    The operators $N_p: S^{p+1} \to S$ and $L : D \to S$ are bounded in the following sense:
    \begin{align*}
        \|Lg\|_S & = \|g\|_D \\
        \|N_p(f_0,\dots,f_p)\|_S & \leq C_p T \|f_0\|_S \dots \|f_p\|_S.
    \end{align*}
\end{proposition}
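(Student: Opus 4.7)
The bound on $L$ is immediate from unitarity of $e^{it\jb{\gamma}\Delta}$ on $\dot{H}^{s_m \vee 0}$. For $N_p$, the plan is to first extract the factor of $T$ via the outer $s$-integration. Writing $N_p = i\int_0^t e^{i\jb{\gamma}(t-s)\Delta} H(s)\,ds$ where
\begin{equation*}
H(s) = \int_0^1 e^{-i\sigma\Delta} \prod_{i=0}^p e^{i\sigma\Delta} f_i(s)\,d\sigma
\end{equation*}
(with the appropriate conjugates suppressed), Minkowski in $s$ and unitarity of the outer propagator on $\dot{H}^{s_m\vee 0}$ reduce matters to
\begin{equation*}
\|N_p\|_{L_t^\infty \dot{H}_x^{s_m\vee 0}} \leq T \sup_{s\in(-T,T)} \|H(s)\|_{\dot{H}_x^{s_m\vee 0}}.
\end{equation*}

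The heart of the argument is therefore the pointwise-in-$s$ multilinear estimate $\|H(s)\|_{\dot{H}_x^{s_m\vee 0}} \lesssim_p \prod_i \|f_i(s)\|_{\dot{H}^{s_m\vee 0}}$. The key observation is that the $\sigma$-integration on $[0,1]$ plays the role of time in a Strichartz-type argument, as reflected in Proposition~\ref{prel/shifted-Strichartz}. Dualizing yields
\begin{equation*}
\bigg\|\int_0^1 e^{-i\sigma\Delta} G(\sigma)\,d\sigma\bigg\|_{L_x^2} \lesssim \|G\|_{L_\sigma^{q'} L_x^{r'}}
\end{equation*}
for any Schr\"odinger-admissible pair $(q, r)$. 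In the case $s_m \leq 0$, I would choose an admissible pair $(q, r)$ tailored to $(p, d)$ and apply H\"older in $\sigma$ and $x$ to split the product $G = \prod_i e^{i\sigma\Delta}f_i(s)$ into $p+1$ factors, each controlled by a standard Strichartz bound $\|e^{i\sigma\Delta} f_i\|_{L_\sigma^{a} L_x^{b}} \lesssim \|f_i(s)\|_{L^2}$ for appropriate admissible $(a, b)$. The H\"older distribution may need to be asymmetric---for example, placing some factors in $L_\sigma^\infty L_x^2$ (trivially bounded by unitarity) to balance the remaining exponents---depending on whether the problem is mass-critical or mass-subcritical.

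For $s_m > 0$, the fractional derivative $|\nabla|^{s_m}$ must additionally be distributed across the $(p+1)$-fold product. The evenness of $p$ makes the nonlinearity a polynomial in $f_i$ and $\bar f_i$, reducing this to iterating the standard (fractional) Leibniz rule. Each resulting term places $|\nabla|^{s_m}$ on a single factor---controlled by the same Strichartz applied to $|\nabla|^{s_m} f_i \in L^2$---while the remaining $p$ factors are controlled by a Sobolev-embedded Strichartz of the form $\|e^{i\sigma\Delta} f\|_{L_\sigma^{a} L_x^{\tilde b}} \lesssim \|f\|_{\dot{H}^{s_m}}$ for a suitable $\tilde b$. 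The main obstacle is the exponent bookkeeping, i.e., choosing the H\"older distribution so that the spatial and $\sigma$-Lebesgue exponents match both the dual Strichartz norm on the left and the admissible (or Sobolev-embedded admissible) Strichartz norms on the right. Once this matching is verified for $s_m \leq 0$, the polynomial structure afforded by even $p$ makes the extension to $s_m > 0$ systematic.
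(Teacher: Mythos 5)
Your proposal is correct and follows essentially the same route as the paper: extract the factor of $T$ from the $s$-integral by Minkowski and unitarity, treat $\sigma\in[0,1]$ as the Strichartz time variable, distribute the product (and, for $s_m>0$, the fractional derivative via Leibniz plus Sobolev embedding) by H\"older, and close with a homogeneous Strichartz bound on each factor, using the compactness of $[0,1]$ to adjust the $\sigma$-exponent in the mass-subcritical case. The paper simply takes the trivial dual pair $(q,r)=(\infty,2)$ (i.e.\ $L_\sigma^1 L_x^2$ via Minkowski) on the left and the admissible pairs $\big(p+1,\tfrac{2d(p+1)}{d(p+1)-4}\big)$ resp.\ $\big(\tfrac{4(p+1)}{dp},2(p+1)\big)$ on the right, which is the exponent bookkeeping you deferred.
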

\begin{proof}
    The linear estimate follows immediately from the fact that the linear Schr\"odinger equation is an isometry on all $L^2$-based Sobolev spaces. We thus focus our attention on the nonlinear correction. 
    
    We begin with the case $s_m \geq 0$, i.e. $p \geq \frac{4}{d}$. Using the standard Strichartz estimates and a Sobolev embedding, we argue directly to estimate $N_p$:
    \begin{align}
        \|N_p(f_0,\dots,f_p)&\|_{L_t^\infty \dot{H}_x^{s_m}(-T,T)} \nonumber\\
        & = \bigg\| \int_0^t \int_0^1 e^{i\jb{\gamma}(t-s)\Delta -i\sigma\Delta}\Big[e^{i\sigma \Delta} f_0(s) \cdot \ldots \cdot e^{i\sigma \Delta} f_p(s)\Big] d\sigma ds\bigg\|_{L_t^\infty \dot{H}_x^{s_m}(-T,T)} \label{lwp/1}\\
        & \lesssim T\Big\|e^{i\sigma \Delta} f_0(s) \cdot \ldots \cdot e^{i\sigma \Delta} f_p(s)\Big\|_{L_s^\infty L_\sigma^1\dot{H}_x^{s_m}((-T,T) \times [0,1])} \nonumber\\
        & \leq T\sum_{j=0}^{p}\big\||\nabla|^{s_m}e^{i\sigma \Delta} f_j(s)\big\|_{L_s^\infty L_\sigma^{p+1} L_x^{\frac{2d(p+1)}{d(p+1)-4}}}\prod_{k \neq j}\big\|e^{i\sigma \Delta} f_k(s)\big\|_{L_s^\infty L_\sigma^{p+1} L_x^{\frac{dp(p+1)}{2}}} \nonumber\\
        & \lesssim T\prod_{k}\big\||\nabla|^{s_m}e^{i\sigma \Delta} f_k(s)\big\|_{L_s^\infty L_\sigma^{p+1} L_x^\frac{2d(p+1)}{d(p+1)-4}} \label{lwp/2}\\
        & \lesssim T\prod_{k}\big\|f_k\big\|_{L_s^\infty \dot{H}_x^{s_m}((-T,T))}. \nonumber
    \end{align}
    Where we note that
    \begin{equation*}
        \big(p+1, \tfrac{2d(p+1)}{d(p+1)-4}\big)
    \end{equation*}
    is a Schr\"odinger-admissible pair and that $\frac{2d(p+1)}{d(p+1)-4} < \infty$ for $p \geq \frac{4}{d}$.

    In the case $s_m < 0$ (i.e. $p < \frac{4}{d}$), we may argue similarly to bound the nonlinear correction as
    \begin{align*}
        \|N_p(f_0, \dots, f_p)\|_{L_t^\infty L_x^2(-T,T)} 
        & \lesssim T\Big\|e^{i\sigma \Delta} f_0(s) \cdot \ldots \cdot e^{i\sigma \Delta} f_p(s)\Big\|_{L_s^\infty L_\sigma^1 L_x^2((-T,T) \times [0,1])} \\
        & \lesssim T\prod_{k}\big\|e^{i\sigma \Delta} f_k(s)\big\|_{L_s^\infty L_\sigma^{p+1} L_x^{2(p+1)}((-T,T) \times [0,1])}.
    \end{align*}
    We note that $p < \frac{4}{d}$ and so $p+1 \leq \frac{4(p+1)}{dp}$. Embedding $L_\sigma^\frac{4(p+1)}{dp}([0,1]) \hookrightarrow L_\sigma^{p+1}([0,1])$ and noting that $\big(\frac{4(p+1)}{dp}, 2(p+1)\big)$ is a Schr\"odinger-admissible pair, we may then estimate
    \begin{align*}
        \|N_p(f_0,\dots,f_p)\|_{L_t^\infty L_x^2(-T,T)} 
        & \lesssim T \prod_{k}\big\|e^{i\sigma \Delta} f_k(s)\big\|_{L_s^\infty L_\sigma^{\frac{4(p+1)}{dp}} L_x^{2(p+1)}((-T,T) \times [0,1])} \\
        & \leq T \prod_{k}\big\|f_k\big\|_{L_s^\infty L_x^2((-T,T))}.
    \end{align*}
    This concludes the proof of Proposition~\ref{lwp/quantitative}.
\end{proof}

From the preceding proposition and the definition \eqref{lwp/definition}, we find that
\begin{equation}\label{lwp/3}
    \|\Xi_j(u_0)\|_{S} \leq (C_p T)^j \|u_0\|_{D}^{jp+1},
\end{equation}
for a potentially new constant $C_p$. Indeed, by construction, we already know that $\Xi_j = 0$ for $n \neq 1 \mod{p}$ and so it suffices to consider $n = 1 \mod{p}$. The justification of the constant $C_p^j$ requires a combinatorial trick which we will employ later; see \eqref{negative/upper/claim} and the progression from \eqref{negative/5} to \eqref{negative/6}. See also \cite[Lemma 2.3]{series-Oh} for the same combinatorial trick in the context of bounding the number of $(p+1)$-ary trees.

The estimate \eqref{lwp/3} then implies that the series \eqref{lwp/power-series} converges absolutely in $S$ provided that $T \|u_0\|_D^{p}\ll_p 1$. Standard arguments then imply that this power series is a solution to \eqref{GT} in the strong sense. Alternatively, the quantitative well-posedness estimates, Proposition~\ref{lwp/quantitative}, can be used to complete the usual contraction mapping argument.

\subsection{Global existence for small initial data}
Finally, we turn our attention to showing global well-posedness for small initial data. By the local well-posedness theory established above, it suffices to show that the $\dot{H}^{s_m \vee 0}$ norm of the solution remains bounded. As this appeared previously in \cite{GT-Jason-small-large-scattering}, we keep the details brief.
\begin{proof}
    We restrict attention to $s_m \geq 0$ as the case $s_m \leq 0$ follows immediately from the conservation of mass. 
    Suppose that a solution $u$ exists on the time interval $(-T,T)$. It then suffices to show that $\|u\|_{\dot{H}^{s_m}}$ is bounded on $(-T,T)$, uniformly in $T$. To do so, we first show bounds in general Strichartz spaces, before specializing.
    
    For ease of notation, fix the Schr\"odinger-admissible pair
    \begin{equation*}
        (q_0, r_0) = \Big(p+2, \frac{2d(p+2)}{d(p+2) - 4}\Big),
    \end{equation*}
    and note that $\frac{2d(p+2)}{d(p+2) - 4} < \infty$ because $p \geq \frac{4}{d}$. By the shifted Strichartz estimates, Lemma \ref{prel/shifted-Strichartz}, for all Sch\"odinger-admissible pairs $(q,r)$ with $q \neq 2$, we may estimate
    \begin{align*}
        \big\|& |\nabla|^{s_m} e^{i \tau \Delta} u\big\|_{L_\tau^\infty L_t^q L_x^r([0,1] \times (-T,T))}\\
        & \lesssim \|u_0\|_{\dot{H}^{s_m}} + \bigg\||\nabla|^{s_m}\int_0^t \int_0^1 e^{i\jb{\gamma}(t-s)\Delta + i(\tau - \sigma)\Delta} \Big[|e^{i\sigma \Delta}u(s)|^p \cdot e^{i\sigma \Delta}u(s)\Big] d\sigma ds \bigg\|_{L_\tau^\infty L_t^q L_x^r([0,1] \times (-T,T))}  \\
        & \leq \|u_0\|_{\dot{H}^{s_m}} + \bigg\||\nabla|^{s_m}\int_0^t e^{i\jb{\gamma}(t-s)\Delta + i(\tau - \sigma)\Delta} \Big[|e^{i\sigma \Delta}u(s)|^p \cdot e^{i\sigma \Delta}u(s)\Big] ds \bigg\|_{L_{\sigma,\tau}^\infty L_t^q L_x^r([0,1]^2 \times (-T,T))}  \\
        & \lesssim \|u_0\|_{\dot{H}^{s_m}} + \bigg\||\nabla|^{s_m} \Big[|e^{i\sigma \Delta}u|^p \cdot e^{i\sigma \Delta}u\Big]\bigg\|_{L_{\sigma}^\infty L_t^{q_0'} L_x^{r_0'}([0,1]\times(-T,T))}  \\
        & \lesssim \|u_0\|_{\dot{H}^{s_m}} + \big\||\nabla|^{s_m} e^{i\sigma \Delta}u\big\|_{L_{\sigma}^\infty L_t^{q_0} L_x^{r_0}([0,1]\times(-T,T))} \big\|e^{i\sigma \Delta}u\big\|^p_{L_{\sigma}^\infty L_t^{q_0} L_x^{\frac{2dp(p+2)}{8}}([0,1]\times(-T,T))}  \\
        & \lesssim \|u_0\|_{\dot{H}^{s_m}} + \big\||\nabla|^{s_m} e^{i\sigma \Delta}u(s)\big\|^{p+1}_{L_{\sigma}^\infty L_t^{q_0} L_x^{r_0}([0,1]\times(-T,T))}.
    \end{align*}
    
    First, take $q = q_0$ and $r = r_0$. If $\|u_0\|_{\dot{H}^{s_m}} \ll_{p} 1$, then a standard continuity argument implies 
    \begin{equation*}
        \big\||\nabla|^{s_m} e^{i\tau \Delta} u\big\|_{L_\tau^\infty L_t^{q_0} L_x^{r_0}([0,1]\times(-T,T))} \leq 2 \|u_0\|_{\dot{H}^{s_m}}.
    \end{equation*}
    Repeating the above calculation with $q = \infty$ and $r = 2$ then implies that 
    \begin{equation*}
        \|e^{i\tau \Delta} u\|_{L_{\tau,t}^\infty \dot{H}_x^{s_m}([0,1]\times(-T,T))} \leq C\big(\|u_0\|_{\dot{H}^{s_m}}\big),
    \end{equation*}
    uniformly in $T$. Given the local well-posedness theory, this concludes the proof of global well-posedness for small initial data in $\dot{H}^{s_m}$. As this gives global spacetime bounds, scattering then follows from an extension of the canonical argument; this is developed in \cite{GT-Jason-small-large-scattering}.
\end{proof}

\section{Failure of analytic well-posedness}
    In this section, we prove Theorem~\ref{intro/analytic-IP} and show that the data-to-solution map fails to be analytic below $H^{s_m}$. Here, and in the section to follow, we restrict our attention to the inhomogeneous Sobolev spaces $H^s$ as they represent the stronger ill-posedness results. 

To show that the data-to-solution map fails to be $C^{p+1}$ from $H^s \to C_t H_x^s$ for $s < s_m$, it suffices to show that the first nonlinear term in \eqref{lwp/power-series} is unbounded for Schwartz initial data. We analyze this term directly. For ease of notation, we restrict attention to the focusing, $\jb{\gamma} = 1$ case. Our methods extend naturally to any $\jb{\gamma} \neq 0$.

\begin{proof}[Proof of Theorem~\ref{intro/analytic-IP}]
    We analyze the first nonlinear term $\Xi_1$. Recall that
    \begin{equation*}
        [\Xi_1(\phi)](t,x) = i \int_0^t \int_0^1 e^{i(t-s-\sigma)\Delta} \Big[\big|e^{i(s + \sigma)\Delta} \phi\big|^p \cdot e^{i(s+\sigma)\Delta} \phi \Big] d\sigma ds.
    \end{equation*}
    For simplicity, we restrict attention to solutions forward in time. Fix $T > 0$ and suppose that $\Xi_1$ is bounded $H^s \to C_t H_x^s([0,T))$ for some $s$ in the sense that
    \begin{equation}\label{analytic/supposition}
        \big\|\Xi(\phi)\big\|_{L_t^\infty H^{s}([0,T))} \lesssim \|\phi\|_{H^{s}}^{p+1}.
    \end{equation}
    We aim to show that this supposition implies $s \geq s_m$.

    Fix $\widehat{\phi}$ to be a smooth bump function supported on the annulus $\{\frac{1}{4} \leq |\xi| \leq 4\}$ such that $\widehat{\phi} = 1$ on the annulus $\{\frac{1}{2} \leq |\xi| \leq 2\}$. For $N \gg 1$, define 
    \begin{equation*}
        \psi_N = N^d e^{i(1 \wedge T)\Delta}\phi(Nx),
    \end{equation*} 
    so that $|\widehat{\psi_N}| = \widehat{\phi}(\xi/N)$ is supported on frequencies like $N$. The propagator $e^{i(1 \wedge T)\Delta}$ will be used in the progression from \eqref{analytic/1} to \eqref{analytic/2}. In the calculations to follow, we suppress the dependence of constants on the exact profile of $\phi$.

    Using the dual formulation of $H^s$ and testing $\Xi_1(\psi_N)$ against $e^{it\Delta} \psi_N$, we may estimate $\Xi_1(\psi_N)$ from below as
    \begin{align*}
        \big\|\Xi_1(&\psi_N)\big\|_{L_t^\infty H_x^s ([0,T))} \\
        & \gtrsim \bigg\|\frac{1}{\|e^{it\Delta}\psi_N\|_{H_x^{-s}}} \int_0^t \int_0^1 \int_{\R^d} \overline{e^{it\Delta}\psi_N}\cdot e^{i(t-s-\sigma)\Delta} \big[|e^{i(s+\sigma)\Delta}\psi_N|^p\cdot e^{i(s+\sigma)\Delta} \psi_N \big]dx d\sigma ds \bigg\|_{L_t^\infty([0,T))} \\
        & = \frac{1}{\|\psi_N\|_{H_x^{-s}}} \int_0^T \int_0^1 \int_{\R^d} \big|e^{i(s+\sigma)\Delta}\psi_N\big|^{p+2}dx d\sigma ds.
    \end{align*}
    
    We make the judicious change of variables $\tau = \sigma + s$ and $\rho = s - \sigma$. Noting the inclusion 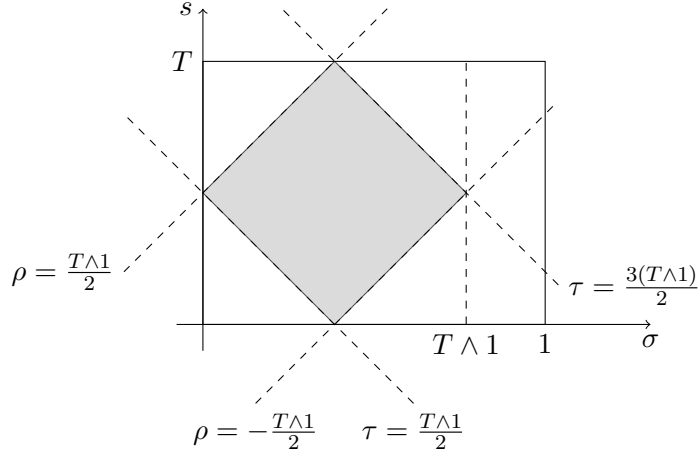
\begin{figure}
        \centering
        \begin{tikzpicture}[scale=3.5]
        \draw[->] (-0.1,0) -- (1.7,0) node[below] {$\sigma$};
        \draw[->] (0,-0.1) -- (0,1.2) node[left] {$s$};
        \draw (0,0) rectangle (1.3,1);
        \node[left] at (0,1) {$T$};
        \node[below] at (1.3,0) {$1$};
        \draw[dashed] (1,0) node[below] {$T \wedge 1$} -- (1,1);
        \coordinate (A) at (0.5,0);   
        \coordinate (B) at (0,0.5);   
        \coordinate (C) at (1,0.5);   
        \coordinate (D) at (0.5,1);   
        \draw[dashed] ($(A) + (-0.3,-0.3)$) node[below] {$\rho = -\frac{T\wedge 1}{2}$} -- ($(C) + (0.35,0.35)$);
        \draw[dashed] ($(B) + (-0.3,-0.3)$) node[left] {$\rho =\frac{T\wedge 1}{2}$} -- ($(D) + (0.2,0.2)$);
        \draw[dashed] ($(A) + (0.3,-0.3)$) node[below] {$\tau=\frac{T\wedge 1}{2}$} -- ($(B) + (-0.3,0.3)$);
        \draw[dashed] ($(C) + (0.35,-0.35)$) node[right] {$\tau=\frac{3(T\wedge 1)}{2}$} -- ($(D) + (-0.2,0.2)$);
        \draw[fill=gray!40,opacity=0.7] (A)--(B)--(D)--(C)--cycle;
        \end{tikzpicture}
        \caption{The change of variables $\tau = \sigma + s$ and $\rho = s - \sigma$ in the case $T < 1$.}
        \label{analytic/change of variables}
    \end{figure}
    \begin{equation*}
        \big\{-\tfrac{T\wedge 1}{2} \leq \rho \leq \tfrac{T \wedge 1}{2}\big\} \times \big\{\tfrac{T\wedge 1}{2} \leq \tau \leq \tfrac{3( T \wedge 1)}{2}\big\} \subset \{0 \leq \sigma \leq 1\} \times \{0 \leq s \leq T\},
    \end{equation*}
    illustrated in Figure \ref{analytic/change of variables} for $T < 1$, we may then estimate
    \begin{align}
        \nonumber \big\|\Xi_1(\psi_N)\big\|_{L_t^\infty H_x^s ([0,T))}
        & \gtrsim \frac{1}{\|\psi_N\|_{H_x^{-s}}} \int_\frac{T\wedge 1}{2}^\frac{3( T \wedge 1)}{2} \int_{-\frac{T\wedge 1}{2}}^{\frac{T \wedge 1}{2}} \int_{\R^d} \big|e^{i\tau\Delta}\psi_N\big|^{p+2}dx d\rho d\tau \\
        \label{analytic/1} & = \frac{T\wedge 1}{\|\psi_N\|_{H_x^{-s}}} \int_\frac{T\wedge 1}{2}^\frac{3( T \wedge 1)}{2}\int_{\R^d} \big|e^{i(\tau + T \wedge 1)\Delta}N^d\phi(Nx)\big|^{p+2}dx d\tau \\
        \label{analytic/2} & = \frac{T\wedge 1}{\|\psi_N\|_{H_x^{-s}}} \int_{-\frac{T\wedge 1}{2}}^\frac{T \wedge 1}{2}\int_{\R^d} \big|e^{i\tau\Delta}N^d\phi(Nx)\big|^{p+2}dx d\tau.
    \end{align}
    Here we note that the shift $e^{i(T \wedge 1)\Delta}$ on $\psi_N$ was used to center the integral over $\tau$ at $\tau = 0$. This is necessary to ensure that the region of integration doesn't escape to infinity as $N \to \infty$ once we make our next change of variables.

    Making the change of variables $y = Nx$ and $r = N^2 \tau$, we find that
    \begin{align*}
        \big\|\Xi_1(\psi_N)\big\|_{L_t^\infty H_x^s ([0,T))}
        & \gtrsim \frac{(T\wedge 1)N^{d(p+2) - d - 2}}{\|\psi_N\|_{H_x^{-s}}} \int_{-\frac{N^2(T\wedge 1)}{2}}^\frac{N^2(T \wedge 1)}{2}\int_{\R^d} \big|e^{ir\Delta_y}\phi(y)\big|^{p+2}dy dr.
    \end{align*}
    Assuming $N^2 \geq \frac{2}{T\wedge 1}$, we find 
    \begin{equation}\label{analytic/3}\begin{split}
        \big\|\Xi_1(\psi_N)\big\|_{L_t^\infty H_x^s ([0,T))}
        & \gtrsim \frac{(T\wedge 1)N^{d(p+2) - d - 2}}{\|\psi_N\|_{H_x^{-s}}} \int_{-1}^1\int_{\R^d} \big|e^{ir\Delta_y}\phi(y)\big|^{p+2}dy dr. 
    \end{split}\end{equation}

    We now consider the $H^\alpha$ norm of $\psi_N$. As $\widehat{\psi_N}$ is supported on frequencies like $N$, direct calculation implies that for $N \gg 1$ and all $\alpha \in\R$,
    \begin{equation*}
        \|\psi_N\|_{H^\alpha} \sim N^{\alpha + \frac{d}{2}}.
    \end{equation*}
    Combining our supposition \eqref{analytic/supposition} with the lower bound \eqref{analytic/3}, this then implies that
    \begin{align*}
        N^{d(p+2) - d - 2 + s - \frac{d}{2}} & \gtrsim_T  N^{(p+1)(s + \frac{d}{2})} \\
        1 & \gtrsim_T N^{p(\frac{d}{2} - \frac{2}{p} - s) }.
    \end{align*}
    For any fixed $T$, taking $N \to \infty$ then concludes that $s \geq s_m = \frac{d}{2} - \frac{2}{p}$, as desired.
\end{proof}


\section{Mass sub-critical norm inflation}\label{negative}

In this section, we prove Theorem~\ref{intro/inflation-negative} and show norm inflation in $H^s$ for the cubic one-dimensional \eqref{GT} with $\jb{\gamma} = -1$ for $s < s_i = -\frac{3}{2}$. 
Our approach is based on the work of Kishimoto \cite{series-Kishimoto} and Oh \cite{series-Oh} on the monomial \eqref{NLS}, with refinements to account for the nonlocal structure of \eqref{GT}.

Our restriction to the cubic one-dimensional model is largely for clarity of exposition, though it has the added benefit of covering the most applicable case. We expect that this method extends naturally to other dimensions and nonlinearities, provided $s < \min(s_i,0)$.

Throughout this section, we fix some $R > 0$ and large parameters $1 \ll A \ll N$. We define our initial data $\phi = \phi(A,N,R; x)$ as follows:
\begin{equation}\label{negative/data}
    \widehat{\phi} = R\big(\1_{[N,N+A]} + \1_{[2N, 2N + A]}\big).
\end{equation}
In the calculations to follow, we work explicitly with $\phi$ and suppress the dependence of $\phi$ on $A,N,R$. Regardless, all implied constants will be independent of $A,N,R$.

\subsection{Multilinear estimates}
The proof of Theorem \ref{intro/inflation-negative} is based on two lemmas, which we establish in this subsection. First, in Lemma~\ref{negative/lower} we find a lower bound for the first nonlinear term $\Xi_1$ in \eqref{lwp/power-series}. Second, in Lemma~\ref{negative/upper} we find a upper bound for all higher-order terms $\Xi_j$. These will be combined in the following subsection these to prove Theorem~\ref{intro/inflation-negative}.

In this subsection, all estimates are valid for $s < -\frac{d}{2} = -\frac{1}{2}$. It is only in the ultimate proof of Theorem \ref{intro/inflation-negative} that the requirement $s < -\frac{3}{2}$ becomes necessary. 
To extend this argument to the case $-\frac{d}{2} < s < s_i$ for general $p$ and $d$, some adaptations are needed to account for the fact that $\jb{\xi}^s \notin L^2$; see \cite{series-Oh}.

\begin{lemma}\label{negative/lower}
Fix $s < -\frac{1}{2}$, $R > 0$, $1 \ll A \ll N$, and $\phi$ as in \eqref{negative/data}. Then
\begin{equation*}
    \big\|[\Xi_1(\phi)](t)\big\|_{H^s} \gtrsim t N^{-2} A^2 R^3,
\end{equation*}
uniformly in $A,N,R$ and $0 < t \ll N^{-2}$. 
\end{lemma}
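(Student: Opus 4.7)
The plan is to analyze $\widehat{\Xi_1(\phi)}(t, \xi)$ directly in Fourier space, extracting a pointwise lower bound on a neighborhood of $\xi = 0$ from the resonant interaction between the two high-frequency bumps of $\widehat{\phi}$. Since $s < -\tfrac{1}{2}$, the weight $\jb{\xi}^{2s}$ is integrable and concentrated near the origin, so such a pointwise bound transfers immediately to the $H^s$ norm.

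First, I would expand the cubic nonlinearity as a triple convolution in Fourier space and perform the $(s,\sigma)$ integration explicitly, arriving (up to absolute constants) at
\begin{equation*}
    \widehat{\Xi_1(\phi)}(t, \xi) = i e^{-it\xi^2} \iiint \widehat{\phi}(\xi_1)\widehat{\phi}(\xi_2)\widehat{\phi}(\xi_3)\, I(t, \Phi)\, \delta(\xi_1 - \xi_2 + \xi_3 - \xi)\, d\xi_1 d\xi_2 d\xi_3,
\end{equation*}
where $\Phi = 2(\xi_2 - \xi_1)(\xi_2 - \xi_3)$ is the cubic resonance function and $I(t, \Phi) = \frac{(e^{it\Phi} - 1)(e^{i\Phi} - 1)}{-\Phi^2}$ satisfies $I(t, \Phi) = t \cdot \frac{e^{i\Phi} - 1}{i\Phi} + \bigO(t^2)$ whenever $|t\Phi| \ll 1$. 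The only interaction producing output $\xi$ near zero is $\xi_1, \xi_3 \in [N, N+A]$ with $\xi_2 \in [2N, 2N+A]$, which yields $\xi \in [-A, 2A]$ and $\Phi \approx 2N^2$; the hypothesis $t \ll N^{-2}$ ensures $|t\Phi| \ll 1$. Resolving the delta via $\xi_2 = \xi_1 + \xi_3 - \xi$, the transform reduces (up to $\bigO(t^2 R^3 A^2)$ corrections) to
\begin{equation*}
    \widehat{\Xi_1(\phi)}(t, \xi) \;\sim\; itR^3 \iint_{\Omega_\xi} \tfrac{e^{i\Phi} - 1}{i\Phi}\, d\xi_1 d\xi_3, \qquad \Omega_\xi := \{(\xi_1, \xi_3) \in [N, N+A]^2 : \xi_1 + \xi_3 - \xi \in [2N, 2N+A]\},
\end{equation*}
with $|\Omega_\xi| \sim A^2$ for $|\xi| \lesssim A$ and $\Phi = 2(\xi_1 - \xi)(\xi_3 - \xi) > 0$ on $\Omega_\xi$.

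To extract the lower bound I would decompose $\tfrac{e^{i\Phi} - 1}{i\Phi} = \tfrac{\sin\Phi}{\Phi} + i \tfrac{1 - \cos\Phi}{\Phi}$; after multiplying by the leading $i$, the \emph{real} part of the integrand equals $-\tfrac{1 - \cos\Phi}{\Phi}$, which has a definite sign on $\Omega_\xi$. Splitting further as $-\tfrac{1}{\Phi} + \tfrac{\cos\Phi}{\Phi}$, the first piece contributes $\sim A^2/N^2$ (area $|\Omega_\xi| \sim A^2$ times pointwise size $1/\Phi \sim 1/N^2$), while the second is $\bigO(A/N^3)$ by non-stationary phase: integration by parts in $\xi_3$ uses $|\partial_{\xi_3}\Phi| = 2|\xi_1 - \xi| \gtrsim N$, yielding boundary terms of size $\lesssim 1/N^3$ integrated over $\xi_1$-length $\lesssim A$. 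Since $A \gg 1$, the first piece dominates, producing $|\widehat{\Xi_1}(t,\xi)| \gtrsim tR^3 A^2/N^2$ uniformly for $|\xi| \leq cA$ (where $e^{-it\xi^2} \approx 1$ since $t\xi^2 \lesssim tA^2 \ll 1$). Integrating $\jb{\xi}^{2s}|\widehat{\Xi_1}|^2$ over $\{|\xi| \leq 1\}$, on which $\int_{-1}^1 \jb{\xi}^{2s} d\xi \sim 1$, then yields $\|\Xi_1(t)\|_{H^s} \gtrsim tN^{-2}A^2 R^3$, as claimed.

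The hard part will be the non-stationary phase bound on $\iint_{\Omega_\xi} \tfrac{\cos\Phi}{\Phi} d\xi_1 d\xi_3$: the naive pointwise bound $\lesssim 1/N^2$ over a region of area $A^2$ would recover only $A^2/N^2$, matching the sign-definite piece and leaving the sign of the total integral ambiguous. Identifying the definite sign in $\tfrac{1 - \cos\Phi}{\Phi}$ and genuinely exploiting $|\nabla\Phi| \gtrsim N$ on $\Omega_\xi$, with a careful accounting of the polygonal boundary of $\Omega_\xi$ (including its corners), is the technical crux of the argument.
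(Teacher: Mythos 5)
Your argument is correct and takes essentially the same route as the paper: restrict to output frequencies $|\xi|\lesssim A$ where the resonance function $\Phi\sim N^2$ is sign-definite, evaluate the $\sigma$-integral to produce $\tfrac{e^{i\Phi}-1}{i\Phi}$, extract the non-oscillatory $1/\Phi$ piece of size $A^2/N^2$ over the convolution region of measure $\sim A^2$, and control the oscillatory remainder by non-stationary phase (the paper uses Van der Corput's lemma in $\xi_2$) to get $O(A/N^3)\ll A^2/N^2$, before transferring the pointwise bound to $H^s$ via integrability of $\jb{\xi}^{2s}$. The only cosmetic differences are that the paper lower-bounds the $\tau$-integral by $t$ directly rather than Taylor-expanding the product $I(t,\Phi)$, and takes the modulus of the sign-definite term rather than its real part.
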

\begin{proof}
    By the definition of $\Xi_1$ and the Plancherel
    theorem, we may write
    \begin{equation*}
        \widehat{[\Xi_1(\phi)]}(t,\xi) = ie^{it\xi^2} \int_0^t \int_0^1 \int_{\xi = \xi_1 - \xi_2 + \xi_3} e^{-i(\tau + \sigma)(\xi^2 - \xi_1^2 + \xi_2^2 - \xi_3^2)} \widehat{\phi}(\xi_1)\overline{\widehat{\phi}(\xi_2)}\widehat{\phi}(\xi_3)d\xi_1 d\xi_2 d\sigma d\tau.
    \end{equation*}
    In all calculations to follow, unless otherwise stated, we restrict our attention to low frequencies $|\xi| \leq A$. In this case, because $A \ll N$, we must have $\xi_1,\xi_3 \in [N, N+A]$ and $\xi_2 \in [2N,2N+A]$. In particular, this implies that $\xi^2 - \xi_1^2 + \xi_2^2 - \xi_3^2 \sim N^2$.

    We focus first on the integral over $\tau$. As $t \ll N^{-2}$ and $\xi^2 - \xi_1^2 + \xi_2^2 - \xi_3^2 \sim N^2$, we find that
    \begin{equation*}
        \bigg|\int_0^t e^{-i\tau(\xi^2 - \xi_1^2 + \xi_2^2 - \xi_3^2)} d\tau\bigg| \gtrsim t.
    \end{equation*}
    This then implies that we may estimate
    \begin{align*}
        \Big|\widehat{[\Xi_1(\phi)]}(t,\xi)\Big|
        & \gtrsim t \bigg|\int_{\xi = \xi_1 - \xi_2 + \xi_3} \int_0^1 e^{-i\sigma(\xi^2 - \xi_1^2 + \xi_2^2 - \xi_3^2)}d\sigma \widehat{\phi}(\xi_1)\overline{\widehat{\phi}(\xi_2)}\widehat{\phi}(\xi_3)d\sigma d\xi_1 d\xi_2 \bigg|.
    \end{align*}
    Evaluating the innermost integral over $\sigma$ explicitly, we find that
    \begin{align*}
        \Big|\widehat{[\Xi_1(\phi)]}(t,\xi)\Big|
        & \gtrsim t \int_{\xi = \xi_1 - \xi_2 + \xi_3} \frac{1}{|\xi^2 - \xi_1^2 + \xi_2^2 - \xi_3^2|} \widehat{\phi}(\xi_1)\overline{\widehat{\phi}(\xi_2)}\widehat{\phi}(\xi_3)d\xi_1 d\xi_2 \\
        & \hspace{10pt} - t \bigg|\int_{\xi = \xi_1 - \xi_2 + \xi_3} \frac{e^{-i(\xi^2 - \xi_1^2 + \xi_2^2 - \xi_3^2)}}{\xi^2 - \xi_1^2 + \xi_2^2 - \xi_3^2}\widehat{\phi}(\xi_1)\overline{\widehat{\phi}(\xi_2)}\widehat{\phi}(\xi_3)d\xi_1 d\xi_2 \bigg| \\
        & = \rn{1} - \rn{2}.
    \end{align*}

    First consider the term $\rn{1}$. Using the fact that $\xi^2 - \xi_1^2 + \xi_2^2 - \xi_3^2 \sim N^2$, we may estimate
    \begin{equation*}
        \rn{1} \gtrsim tN^{-2} \int_{\xi = \xi_1 - \xi_2 + \xi_3}\widehat{\phi}(\xi_1)\overline{\widehat{\phi}(\xi_2)}\widehat{\phi}(\xi_3)d\xi_1 d\xi_2.
    \end{equation*}
    As this is a three-fold convolution of indicator functions of intervals of width $A$, basic convolution properties imply that
    \begin{equation*}
        \rn{1} \gtrsim tN^{-2} A^2 R^3,
    \end{equation*}
    for $|\xi|\leq A$. This is the desired lower bound for $\Xi_1$. 
    It then suffices to show that $\rn{2}$ does not negate this quantity.

    We now focus our attention on term $\rn{2}$. Informally, because $e^{-i(\xi^2 - \xi_1^2 + \xi_2^2 - \xi_3^2)}$ is oscillating, we expect large cancellations and for $\rn{2} \ll \rn{1}$. To make this precise, we treat $\rn{2}$ as an oscillatory integral.
    
    By the definition of $\xi_3$, we may factor
    \begin{equation*}
        \xi^2 - \xi_1^2 + \xi_2^2 - \xi_3^2 = -2(\xi_1 - \xi_2)(\xi_1 - \xi).
    \end{equation*}
    Then by the definition \eqref{negative/data} of $\phi$, we may rewrite
    \begin{align*}
        \rn{2} = tR^3\bigg|\int_N^{N+A} \int_{2N}^{2N + A} \frac{e^{2i(\xi_1 - \xi_2)(\xi_1 - \xi)}}{2(\xi_1 - \xi_2)(\xi_1 - \xi)}\1_{[N,N+A]}(\xi - \xi_1 + \xi_2)d\xi_2 d\xi_1\bigg|.
    \end{align*}
    We focus on the integral over $\xi_2$ as this will generate our cancellations. The indicator function $\1_{[N,N+A]}(\xi_3)$ will restrict the region of integration to a smaller interval $[a,b] \subset [2N, 2N+A]$ which depends on the output frequency $\xi$. For our purposes, the precise formula for $[a,b]$ is irrelevant, so we leave it undefined.
    
    By Van der Corput's Lemma~(e.g., \cite[Cor.~2.6.8]{grafakos}), we may estimate
    \begin{align*}
        \rn{2} 
        & \lesssim tR^3\int_N^{N+A} \frac{1}{|\xi_1 - \xi|} \bigg[ \frac{1}{|\xi_1 - b| |\xi_1 - \xi|} + \int_a^b \frac{1}{|\xi_1 - \xi| |\xi_1-\xi_2|^2} d\xi_2\bigg]d\xi_1 \\
        & \lesssim tR^3AN^{-1}[ N^{-2} + AN^{-3}] \\
        & \lesssim tR^3AN^{-3},
    \end{align*}
    where we note that $|\xi_1 - \xi|, |\xi_1 - b|,|\xi_1 - \xi_2| \gtrsim N$ uniformly for $|\xi| \leq A$.

    Combining the estimates for $\rn{1}$ and $\rn{2}$ and choosing $N \gg 1$, we then find that
    \begin{equation*}
        \Big|\widehat{[\Xi_1(\phi)]}(t,\xi)\Big| \gtrsim tN^-2 A^2 R^3(1 - A^{-1}N^{-2}) \gtrsim tN^{-2} A^2 R^3.
    \end{equation*}
    As $A \gg 1$ and $\jb{\xi}^s \in L^2$ because $s < -\frac{1}{2}$, direct calculation then implies that
    \begin{equation*}
        \|\Xi_1(\phi)\|_{H^s} \gtrsim tN^{-2} A^2 R^3 \|\jb{\xi}^s\|_{L_\xi^2(|\xi|\leq A)} \geq tN^{-2} A^2 R^3,
    \end{equation*}
    as desired. This concludes the proof of Lemma \ref{negative/lower}.
\end{proof}

We now show upper bounds for the higher order terms $\Xi_j$ in \eqref{lwp/power-series}. Compared to prior work on the monomial \eqref{NLS} \cite{series-Kishimoto,series-Bourgain}, here we must exploit the nonlocal structure of \eqref{GT} to find additional cancellations.
\begin{lemma}\label{negative/upper}
Fix $s < -\frac{1}{2}$, $R > 0$, $1 \ll A \ll N$, and $\phi$ as in \eqref{negative/data}. Then for all $j$,
\begin{equation*}
    \|[\Xi_j(\phi)](t)\|_{H^s} \leq C^j t^j R^{2j+1} (\log A)^{2j},
\end{equation*}
for some universal constant $C$ which is uniform in $R,N,A$ and $0 < t \ll N^{-2}$. 
\end{lemma}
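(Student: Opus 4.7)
The plan is to induct on $j$, adapting the ternary-tree expansion used for monomial \eqref{NLS} by Oh \cite{series-Oh} and Kishimoto \cite{series-Kishimoto}, so as to exploit the additional smoothing available from the $\sigma$-integral unique to \eqref{GT}. The base case $j = 0$ is immediate: $\Xi_0(\phi)(t) = e^{it\Delta}\phi$ is an $H^s$ isometry and $\|\phi\|_{H^s}^2 \sim R^2 A N^{2s}$, so $s < -\tfrac{1}{2}$ and $A \ll N$ together yield $\|\Xi_0(\phi)(t)\|_{H^s} \lesssim R$.

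For the inductive step, I would iterate the recursion \eqref{lwp/definition} to express $\widehat{\Xi_j(\phi)}(t, \xi)$ as a sum over rooted ternary trees $\mathcal{T}$ with $j$ internal nodes and $2j + 1$ leaves. Each internal node $k$ carries a time variable $s_k$ nested inside its parent's, an averaging variable $\sigma_k \in [0,1]$, and local frequencies $\xi_{k,1}, \xi_{k,2}, \xi_{k,3}$ subject to $\xi_k = \xi_{k,1} - \xi_{k,2} + \xi_{k,3}$; the leaves carry factors of $\widehat{\phi}$ (possibly conjugated). As in the proof of Lemma~\ref{negative/lower}, each node contributes an oscillatory phase $e^{-i(s_k + \sigma_k)\Phi_k}$ with $\Phi_k = -2(\xi_{k,1} - \xi_{k,2})(\xi_{k,1} - \xi_k)$.

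The key new input beyond the \eqref{NLS} analysis is the smoothing
\[
\bigg|\int_0^1 e^{-i\sigma_k \Phi_k} \, d\sigma_k\bigg| \lesssim \min(1, |\Phi_k|^{-1}),
\]
extracted at every internal node. Together with the trivial bound $\lesssim t$ on each of the $j$ nested time integrals (contributing $t^j$ in total), the task reduces to estimating multilinear frequency integrals in which each internal node contributes $|\Phi_k|^{-1}$ and each leaf contributes amplitude $R$ on an $A$-length window near $\pm N$ or $\pm 2N$. The $2j + 1$ leaves account for $R^{2j+1}$; the $(\log A)^{2j}$ arises because on the support of the data the differences $|\xi_{k,1} - \xi_{k,2}|$ and $|\xi_{k,1} - \xi_k|$ appearing in $|\Phi_k|^{-1}$ remain bounded below by $1$, and integrating either reciprocal factor over an $A$-length window produces a single $\log A$---yielding $2$ per node and $2j$ in total. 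A standard Catalan-type count bounds the number of trees by $C^j$, and $s < -\tfrac{1}{2}$ secures $\jb{\xi}^s \in L^2_\xi$ on the bounded range of the output frequency.

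The main obstacle will be arranging the frequency integrations so that they decouple into exactly $2j$ independent $\log A$ losses rather than compounding into power losses. Since $\xi_k$ inside $|\xi_{k,1} - \xi_k|^{-1}$ is itself determined recursively by the descendant leaves of node $k$, the integrations are not manifestly independent. Following the leaves-to-root ordering of Kishimoto \cite{series-Kishimoto}, one resolves this by integrating inside out: at each stage the active $|\Phi_k|^{-1}$ factor involves a single new integration variable, ranging over an $A$-window at scale $\gtrsim 1$ and producing one clean $\log A$, while the complementary variables vary at scale $\gtrsim N$ and contribute only constants uniform in $R$, $N$, and $A$.
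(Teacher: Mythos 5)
Your proposal is correct and follows essentially the same route as the paper: the paper runs the induction directly on the recursion \eqref{lwp/definition} (with $(1+j)^{-2}$ weights playing the role of your Catalan count) rather than summing over fully expanded ternary trees, but the analytic content---the bound $1\wedge|\Phi|^{-1}$ from the $\sigma$-integral, the factorization $\xi^2-\xi_1^2+\xi_2^2-\xi_3^2=-2(\xi_1-\xi_2)(\xi_1-\xi)$, one $\log A$ per frequency integration over a width-$A$ window, $t^j$ from the nested time integrals, and $R$ per leaf---is identical. The one claim to tighten is that $|\xi_{k,1}-\xi_{k,2}|$ and $|\xi_{k,1}-\xi_k|$ are bounded below by $1$ on the support (after cancellations at deeper nodes these differences can vanish); the $\min(1,|\Phi_k|^{-1})$ you already carry handles this, and the paper makes the attendant bookkeeping precise by inductively tracking $\widehat{\Xi_j}$ as supported on a controlled family of width-$A$ intervals.
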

\begin{proof}
    For the sake of induction, we prove the stronger, frequency-wise bound
    \begin{equation}\label{negative/upper/claim}
        \Big|\widehat{[\Xi_j(\phi)]}(t,\xi)\Big| \leq t^j R^{2j + 1} (\log A)^{2j} \frac{C^j}{(1+j)^2} \sum_{I \in \mathcal{I}_j} \1_I(\xi),
    \end{equation}
    where $C$ is a large universal constant and $\mathcal{I}_j$ is a collection of at most $\frac{2C^j}{(1+j)^2}$ intervals of width $A$. We note that the factors of $(1+j)^{-2}$ are solely a combinatorial trick to close the induction; they serve no other purpose in the analysis. Because $s < -\frac{1}{2}$ and $\jb{\xi}^{2s}$ is integrable, \eqref{negative/upper/claim} is sufficient to conclude the proof of the lemma.

    We proceed by induction. By construction, \eqref{negative/upper/claim} holds for $\Xi_0$, so it suffices to show the inductive step. Suppose for the sake of induction that \eqref{negative/upper/claim} holds for all $k < j$. 
    In the following calculations, we suppress the dependence on $\phi$; i.e. we let $\widehat{\Xi_k}(t,\xi) = \widehat{[\Xi_k(\phi)]}(t,\xi)$.
    
    By definition, we may write
    \begin{align*}
        \widehat{\Xi_j} (t,\xi) = i\sum_{k + \ell + m = j - 1} \int_0^t\int_{\xi = \xi_k - \xi_\ell + \xi_m} e^{-i(t-s)\xi^2} \int_0^1 &  e^{i\sigma(\xi^2 - \xi_k^2 + \xi_\ell^2 - \xi_m^2)} d\sigma \\
        & \cdot \Big[\widehat{\Xi}_k(s,\xi_k) \overline{\widehat{\Xi}_\ell(s,\xi_\ell)} \widehat{\Xi}_m(s,\xi_m) \Big]d\xi_k d\xi_\ell ds.
    \end{align*}
    We note that we may bound the innermost integral over $\sigma$ by $1$. Alternatively, we may evaluate the innermost integral over $\sigma$ explicitly to find an upper bound which depends inversely on $\xi^2 - \xi_k^2 + \xi_\ell^2 - \xi_m^2$. Choosing the minimum of these quantities and invoking the inductive hypothesis, we may then estimate $\Xi_j$ as
    \begin{align*}
        \big|\widehat{\Xi_j} (t,\xi)\big| \leq \sum_{k + \ell + m = j - 1}\sum_{K \in \mathcal{I}_k} \sum_{L \in \mathcal{I}_\ell}\sum_{M \in \mathcal{I}_m} & R^{2j+1} (\log A)^{2j-2} \tfrac{C^{j-1}}{(1+k)^2 (1+\ell)^2 (1+m)^2} \\ 
        & \cdot\int_K \int_L\int_0^t s^{j-1} \Big[1 \wedge \tfrac{2}{|\xi^2 - \xi_k^2 + \xi_\ell^2 - \xi_m^2|} \Big] \1_M(\xi - \xi_k + \xi_\ell) ds d\xi_\ell d\xi_k.
    \end{align*}
    
    Integrating in $s$, factoring $|\xi^2 - \xi_k^2 + \xi_\ell^2 - \xi_m^2| = 2 |\xi -\xi_k||\xi_k - \xi_\ell|$, and noting that $\1_{M}(\xi - \xi_k + \xi_\ell) \leq \1_{K - L + M}(\xi)$, we find that
    \begin{align*}
        \big|\widehat{\Xi_j} (t,\xi)\big| \leq \sum_{k + \ell + m = j - 1} \sum_{K \in \mathcal{I}_k} \sum_{L \in \mathcal{I}_\ell}\sum_{M \in \mathcal{I}_m} & R^{2j+1} (\log A)^{2j-2} t^j \tfrac{C^{j-1}}{(1+k)^2 (1+\ell)^2 (1+m)^2} \\ 
        & \cdot \1_{K - L + M}(\xi) \int_K \int_L \Big[1 \wedge \tfrac{1}{|\xi -\xi_k||\xi_k - \xi_\ell|} \Big] d\xi_\ell d\xi_k.
    \end{align*}
    As $K$ and $L$ are of width $A$, integrating over $\xi_\ell$ or $\xi_k$ yields, at worst, $1 + \log A$. Noting that $(1 + \log A)^2 \leq 2 (\log A)^2$ for $A \gg 1$, we then find
    \begin{align*}
        \big|\widehat{\Xi_j} (t,\xi)\big| \leq & R^{2j+1} (\log A)^{2j} t^j\sum_{k + \ell + m = j - 1} \tfrac{2C^{j-1}}{(1+k)^2 (1+\ell)^2 (1+m)^2} \sum_{K \in \mathcal{I}_k} \sum_{L \in \mathcal{I}_\ell}\sum_{M \in \mathcal{I}_m}  \1_{K - L + M}(\xi).
    \end{align*}

    We now invoke the combinatorial trick. As $k + \ell + m = j - 1$, we know that $1+j$ satisfies 
    \begin{equation*}
        1 + j \leq 3 \big[(1+ k) \vee (1 + \ell) \vee (1+m)\big].
    \end{equation*}
    This then implies that
    \begin{align} \label{negative/5}
        \sum_{k + \ell + m = j - 1} \frac{2C^{j-1}}{(1+k)^2 (1+\ell)^2 (1+m)^2} & \leq \frac{18 C^{j-1}}{(1+j)^2} \bigg( \sum_{n \geq 0} \frac{1}{(1+n)^2}\bigg)^2.
    \end{align}
    Choosing $C \geq 18 ( \sum_{n \geq 0} (1+n)^{-2})^2$, we find
    \begin{align}\label{negative/6}
        \sum_{k + \ell + m = j - 1} \frac{2C^{j-1}}{(1+k)^2 (1+\ell)^2 (1+m)^2} & \leq \frac{C^j}{(1+j)^2},
    \end{align}
    as desired.

    For the sum over intervals, we invoke the same trick. We note that the set $K - L + M$ may be constructed of three intervals of width $A$. We may then bound
    \begin{align*}
        \sum_{k + \ell + m = j -1}\sum_{K \in \mathcal{I}_k} \sum_{L \in \mathcal{I}_\ell}\sum_{M \in \mathcal{I}_m}  \1_{K - L + M}(\xi) \leq \sum_{k + \ell + m = j - 1}\sum_{I \in \tilde{\mathcal{I}_j}} \1_I(\xi),
    \end{align*}
    where $\tilde{\mathcal{I}}_j$ is a collection of, at most, $\frac{24 C^{j-1}}{(1+k)^2(1+\ell)^2 (1+m)^2}$ intervals of width $A$.
    By the same calculation as previously, we may choose 
    \begin{equation*}
        2C \geq 24\cdot 3^2 \cdot \bigg(\sum_{n\geq 0}\tfrac{1}{(1+n)^{2}}\bigg)^2
    \end{equation*} 
    to conclude \eqref{negative/upper/claim} and complete the proof of Lemma~\ref{negative/upper}.
\end{proof}

\subsection{Proof of Theorem~\ref{intro/inflation-negative}}
In this subsection, we prove Theorem~\ref{intro/inflation-negative} and show norm inflation for the one-dimensional cubic \eqref{GT} with $\jb{\gamma} = -1$.

\begin{proof}
    Fix our initial data $\phi$ as defined in \eqref{negative/data} for $N, A, R \gg 1$ to be chosen later and let $T$ denote the time at which norm inflation occurs, to be determined later. From our preceding lemmas, a number of relationships are necessary between the parameters $A, N, R, T$:
    \begin{enumerate}[(i)]
        \item \label{requirement/lwp}
            {\em Local well-posedness:}
            Due to the local well-posedness theory in Theorem~\ref{intro/lwp}, we require
            \begin{equation}\label{negative/7}
                T\|\phi\|_{L^2}^2 \ll 1 \quad \iff \quad T R^2 A \ll 1.
            \end{equation}
            This ensures that a solution to \eqref{GT} with initial data $\phi$ exists and can be expressed by the power series expansion \eqref{lwp/power-series} on the interval $(-T,T)$.\footnote{Curiously, this requirement is the ultimate limiter for the values of $s$ at which we find norm inflation.}
        \item\label{requirement/small-data}
            {\em Small initial data:}
            To ensure that the initial data $\phi$ is small, we require that 
            \begin{equation*}
                \|\phi\|_{H^s} \sim N^s R A^{\frac{1}{2}} \ll 1.
            \end{equation*}
        \item\label{requirement/growth}
            {\em Norm growth:}
            The norm growth of the solution will be driven by a high-to-low frequency cascade in the first nonlinear term $\Xi_1$ in \eqref{lwp/power-series}. From Lemma~\ref{negative/lower}, this implies that we need
            \begin{equation*}
                T N^{-2} A^2 R^3 \gg 1.
            \end{equation*}
        \item\label{requirement/convergence}
            {\em Convergence of higher order terms:} To ensure that the growth of $\Xi_1$ is not negated by higher order effects, we require the higher order terms $\Xi_j$ to converge and be dominated by $\Xi_1$. Comparing the upper bounds from Lemma~\ref{negative/upper} to the lower bound in Lemma~\ref{negative/lower}, this corresponds to two requirements:
            \begin{equation*}
                TR^2 (\log A)^2 \ll 1 \qtq{and} TR^2 N^2 A^{-2} (\log A)^4 \ll 1,
            \end{equation*}
            the first ensuring that the higher order terms converge and the second that $\Xi_1$ dominates.
        
        \item\label{requirement/separation}
            {\em Separation:} From our analysis in Lemmas \ref{negative/lower} and \ref{negative/upper}, we require
            \begin{equation*}
                1 \ll A \ll N.
            \end{equation*}
            This ensures that $\widehat{\phi}$ consists of two disjoint indicator functions.

        \item\label{requirement/instantaneity}
            {\em Instantaneity:} In order to force norm inflation to occur instantly, we require
            \begin{equation*}
                T \ll 1.
            \end{equation*}

    \end{enumerate}
    
    \noindent To achieve these requirements, we fix some $N \gg 1$ and then choose\footnote{We note that this differs from the parameters one would choose for NLS. The time $T$ is significant smaller and $R$ is significantly larger; see \cite{series-Oh}.}
    \begin{equation*}
        R = N^{1 + 3\delta}, \quad A = N^{1-\delta}, \qtq{and} T = N^{-3 - 6 \delta},
    \end{equation*}
    where $\delta > 0$ is chosen sufficiently small so that $s + \frac{3}{2} + \frac{5}{2} \delta < 0$. This satisfies requirement \eqref{requirement/instantaneity}. With this choice, we find that
    \begin{equation*}
        T \|\phi\|_{L^2}^2 = N^{-\delta} \ll 1,
    \end{equation*}
    satisfying requirement \eqref{requirement/lwp}. In addition, we find that
    \begin{equation*}
        \|\phi\|_{H^s} \sim N^s R A^{\frac{1}{2}} = N^{s + \frac{3}{2} + \frac{5}{2} \delta} = N^{0-},
    \end{equation*}
    satisfying requirement \eqref{requirement/small-data}. Invoking Theorem~\ref{intro/lwp}, let $u : (-T,T) \to \C$ be the solution to \eqref{GT} with initial data $u_0 = \phi$ given by the power series \eqref{lwp/power-series}.
    
    Choosing $N \gg 1$ sufficiently large, we may ensure that $(\log A)^2 \leq N^{\delta}$. At time $T$, requirement \eqref{requirement/separation}, Lemma \ref{negative/lower}, and Lemma \ref{negative/upper} then imply that we may estimate
    \begin{align*}
        \|u(T)\|_{H^s} 
        & \geq \|[\Xi_1(\phi)](T)\|_{H^s} - \|\phi\|_{H^s} - \sum_{j \geq 2} \|[\Xi_j(\phi)](T)\|_{H^s} \\
        & \sim TN^{-2} A^2 R^3 - N^s R A^{\frac{1}{2}} - \sum_{j \geq 2} C^j T^j R^{2j+1} (\log A)^{2j} \\
        & \geq N^\delta - N^{0-} - C^2 N^{-1 + 5\delta} \sum_{j \geq 0} (CN^{-1 + \delta})^j \\
        & \gtrsim N^\delta,
    \end{align*}
    wherein we see that requirements \eqref{requirement/growth} and \eqref{requirement/convergence} were used.
    Taking $N \to \infty$, we find that $\|\phi\|_{H^s} \to 0$, $\|u(T)\|_{H^s} \to \infty$ and $T \to 0$ which concludes the proof of Theorem~\ref{intro/inflation-negative}.
\end{proof}

\section{Energy super-critical norm inflation}
    In this section, we prove Theorem \ref{intro/inflation-energy} and show norm inflation in $H^s$ for \eqref{GT} provided $1 \leq s < s_i$. This is built on a virial identity, which we show in Subsection~\ref{virial}, and the resulting energy equipartition phenomenon, which we show in Subsection~\ref{energy-equipartition}.
    
        \subsection{Virial identity}\label{virial}
            In this section, we extend the virial identity to the general \eqref{GT} equation; originally due to Glassey \cite{NLS-virial-glassey} for \eqref{NLS} and shown for the focusing one dimensional \eqref{GT} by Choi, Hong, and Lee \cite{GT-dichotomy}. For comparison to \cite{GT-dichotomy}, we note that our energy is the negative of the energy used by \cite{GT-dichotomy}. This is chosen for the sake of notation and so that the $E(u) \geq 0$ in the defocusing, $\jb{\gamma} < 0$ case. 

As this virial identity is only used for Schwartz functions, we do not concern ourselves with questions of regularity or decay. Instead, we assume that $u$ is sufficiently regular and decaying as to justify all techniques used. The necessary regularity and decay are discussed in more detail in \cite{GT-dichotomy}.

\begin{proof}[Proof of Proposition~\ref{intro/virial}]
    Define the variance of $u$ as
    \begin{equation*}
        v(t) = \int |x|^2 |u(t,x)|^2 dx
    \end{equation*}
    When it is convenient, we use subscripts of $u$ to indicate partial derivatives and subscripts of $x$ to indicate coordinates. In the usual way, any repeated indices are summed.
    
    Taking a derivative of $v$ in time and applying integration by parts, we find that
    \begin{align}
        \dot{v}(t) & = 2 \Re \int|x|^2 \overline{u} u_t \nonumber \\
        & = - 2 \Im \int|x|^2\overline{u}\bigg[\jb{\gamma} u_{jj} + \int_0^1 e^{-i\sigma \Delta} \big(|e^{i\sigma \Delta} u|^{p} \cdot e^{i\sigma \Delta} u \big)d\sigma \bigg]dx \nonumber\\
        & =  4 \jb{\gamma} \Im \int\overline{u} x_j u_j dx  - 2 \Im \iint_0^1  \overline{e^{i\sigma \Delta}(|x|^2u)} \cdot |e^{i\sigma \Delta} u|^{p} \cdot e^{i\sigma \Delta} u \; d\sigma dx \nonumber\\
        & = \jb{\gamma} \dot{v}_1(t) + \dot{v}_2(t).\label{virial/0}
    \end{align}
    
    We first consider $\dot{v}_2$ and note the identity,
    \begin{align*}
        e^{i \sigma \Delta} |x|^2 
        & = (|x|^2 + 4 i \sigma x \cdot \nabla + 2d i \sigma - 4 \sigma^2 \Delta) e^{i\sigma \Delta}\\
        & = \big[|x|^2 + i(4 \sigma x \cdot \nabla + 2d \sigma + 4 \sigma^2 \partial_\sigma)\big] e^{i\sigma \Delta}.
    \end{align*}
    In the integral, the $|x|^2$ term will vanish due to the imaginary part in $\dot{v}_2$. Integrating by parts again, we then find that
    \begin{align}
        \dot{v}_2(t) 
        & = -2 \Re \iint_0^1 \overline{(4\sigma x \cdot \nabla + 2 d \sigma + 4 \sigma^2 \partial_\sigma) e^{i\sigma \Delta} u} \cdot |e^{i\sigma \Delta} u|^{p} \cdot e^{i\sigma \Delta} u \; d\sigma dx \nonumber \\
        & = 2 \iint_0^1 \Big(\tfrac{4}{p+2}\sigma x \cdot \nabla + 2 d \sigma + \tfrac{4 \sigma^2}{p+2} \partial_\sigma\Big) |e^{i\sigma \Delta} u|^{p+2} d\sigma dx \nonumber \\
        & = \tfrac{4(dp - 4)}{p+2}\iint_0^1\sigma |e^{i\sigma \Delta} u|^{p+2} d\sigma dx + \tfrac{8}{p+2}\int|e^{i\Delta} u|^{p+2}dx. \label{virial/1}
    \end{align}
    In the focusing case $\jb{\gamma} > 0$, considered by \cite{GT-dichotomy}, it is sufficient to remove the $e^{i \Delta} u$ term and bound $\dot{v}_2$ above by $0$. However, in the defocusing case $\jb{\gamma} < 0$, we must use this $e^{i\Delta}$ term to cancel a corresponding term in $\ddot{v}_1$.
    
    We now turn attention to $\dot{v}_1$. Differentiating in $t$ and once again integrating by parts, we find
    \begin{align*}
        \ddot{v}_1(t)
        & = - 4 d \Re \int\overline{u} u_t \; dx - 8 \Im \int u_t x_j \overline{u_j} dx\\
        & = 4 d \jb{\gamma}\int|\nabla u|^2 - 4 d \iint_0^1|e^{i\sigma \Delta} u|^{p+2}d\sigma dx - 8\jb{\gamma} \Re \int u_{kk} x_j \overline{u_j} dx \\
        & \hspace{10pt}  - 8 \Re \iint_0^1 \big(|e^{i\sigma \Delta} u|^{p} \cdot e^{i\sigma \Delta} u \big) \overline{e^{i\sigma \Delta}(x \cdot \nabla u)} d\sigma dx \\
        & = (4 d +8)\jb{\gamma}\int|\nabla u|^2 - 4 d \iint_0^1|e^{i\sigma \Delta} u|^{p+2}d\sigma dx + 8 \jb{\gamma} \Re \int u_{j} \overline{u_{jk}} x_k dx\\
        & \hspace{10pt}  - 8 \Re \iint_0^1 \big(|e^{i\sigma \Delta} u|^{p} \cdot e^{i\sigma \Delta} u \big) \overline{e^{i\sigma \Delta}(x \cdot \nabla u)} d\sigma dx \\
        & = (4 d +8) \jb{\gamma}\int|\nabla u|^2 - 4 d \iint_0^1|e^{i\sigma \Delta} u|^{p+2}d\sigma dx  + \rn{3} + \rn{4}.
    \end{align*}
    Consider the Hessian term $\rn{3}$. Integrating by parts once again, we find that
    \begin{align*}
        \rn{3} & = - 8 d\jb{\gamma} \int |\nabla u|^2 dx - 8 \jb{\gamma} \Re \int u_{jk} \overline{u_{j}} x_k dx = - 8 d\jb{\gamma} \int |\nabla u|^2 dx - \rn{3}.
    \end{align*}
    Upon rearranging, this implies that
     \begin{align*}
          \rn{3} & = - 4 d\jb{\gamma} \int |\nabla u|^2 dx.
     \end{align*}
    
    Now consider term $\rn{4}$. We recall the following identity, as operators acting on the right,
    \begin{equation*}
        e^{i \sigma \Delta} (x \cdot \nabla) = ( x \cdot \nabla + 2 i \sigma \Delta) e^{i \sigma \Delta} = (x \cdot \nabla + 2 \sigma \partial_\sigma) e^{i \sigma \Delta}.
    \end{equation*}
    With the ultimate use of integration by parts, this then  implies that
    \begin{align*}
        \rn{4} 
        & = - 8 \Re \iint_0^1 \big(|e^{i\sigma \Delta} u|^{p} \cdot e^{i\sigma \Delta} u \big) (x \cdot \nabla + 2 \sigma \partial_\sigma) \overline{e^{i \sigma \Delta} u} \; d\sigma dx \\
        & = - \tfrac{8}{p+2} \iint_0^1 (x \cdot \nabla + 2 \sigma \partial_\sigma) |e^{i \sigma \Delta} u|^{p+2} \; d\sigma dx \\
        & = \tfrac{8d + 16}{p+2} \iint_0^1 |e^{i \sigma \Delta} u|^{p+2} \; d\sigma dx - \tfrac{16}{p+2} \int\sigma |e^{i \sigma \Delta} u|^{p+2} \Big|_{\sigma = 0}^{\sigma = 1} dx \\
        & = \tfrac{8d + 16}{p+2} \iint_0^1 |e^{i \sigma \Delta} u|^{p+2} \; d\sigma dx - \tfrac{16}{p+2} \int|e^{i \Delta} u|^{p+2} dx.
    \end{align*}
    
    Combining all terms and recalling the formula for the energy \eqref{energy}, we find
    \begin{align}
        \ddot{v}_1(t)
        & = 8 \jb{\gamma}\int|\nabla u|^2 + \tfrac{4(4 - dp)}{p+2} \iint_0^1 |e^{i \sigma \Delta} u|^{p+2} \; d\sigma dx - \tfrac{16}{p+2} \int|e^{i\Delta} u|^{p+2} dx \nonumber\\
        & = -16 E(u) - \tfrac{4(dp - 8)}{p+2} \iint_0^1 |e^{i\sigma \Delta} u|^{p+2} d\sigma dx - \tfrac{16}{p+2} \int |e^{i \Delta} u|^{p+2} dx. \label{virial/2}
    \end{align}
    
    We now restrict our attention to the defocusing case, $\jb{\gamma} = -1$, for $-\frac{1}{2} \leq t \leq 0$ and $p \geq \frac{8}{d}$. In this regime, we find that
    \begin{align*}
        16 E(u) \leq -\ddot{v}_1 & \leq \big(16 + \tfrac{4(dp-8)}{p+2}\big) E(u) + \tfrac{16}{p+2} \int|e^{i\Delta} u|^{p+2} \qtq{and}
        \dot{v}_2 \geq \tfrac{8}{p+2} \int |e^{i\Delta} u|^{p+2} dx.
    \end{align*}
    In this case, the $e^{i\Delta} u$ term in $-\ddot{v}_1$ has a bad sign. However, we can use the corresponding term in $\dot{v}_2$ to counteract it.
    Returning to the full variance evolution, we then find that
    \begin{align*}
        v(t) 
        & = v(0) - \dot{v}_1(0) t + \int_0^t \dot{v}_2(s) ds - \int_0^t \int_0^s \ddot{v}_1(\tau) d\tau ds \\
        & \leq v(0) - \dot{v}_1(0) t + C(p,d) E(u) t^2 + \tfrac{8}{p+2} \iint_0^t |e^{i\Delta} u(s)|^{p+2} ds dx + \tfrac{16}{p+2} \iint_0^t \int_0^s |e^{i\Delta} u(\tau)|^{p+2}d\tau ds dx \\
        & \leq v(0) - \dot{v}_1(0) t + C(p,d) E(u) t^2 - \tfrac{8}{p+2} (1 + 2t)\iint_t^0 |e^{i\Delta} u(\tau)|^{p+2}d\tau ds dx.
    \end{align*}
    Provided $-\frac{1}{2}\leq t \leq 0$, this concludes that
    \begin{equation}\label{virial/v}
        v(t) \leq v(0) - \dot{v}_1(0) t + C(p,d) E(u) t^2,
    \end{equation}
    where $C(p,d) > 0$ for $p \geq \frac{8}{d}$.
    
    Finally, we consider the focusing case, $\jb{\gamma} = 1$, for $t < 0$. In this regime, we find that
    \begin{equation*}
        \ddot{v}_1(t) \leq -16E(u)
        \qtq{and}
        \dot{v}_2 \geq 0.
    \end{equation*}
    This implies that
    \begin{align*}
        v(t) & = v(0) + \dot{v}_1(0) t + \int_0^t \dot{v}_2(s) ds + \int_0^t \int_0^s \ddot{v}_1(\tau) d\tau ds \\
        & \leq v(0) + \dot{v}_1(0) t - 8E(u) t^2,
    \end{align*}
    as desired.

    In addition, in either case, we find that $\ddot{v}_1(t) \leq -16 E(u)$ for $t < 0$ and so
    \begin{equation}\label{virial/dot-v_1}
        \dot{v}_1(t) = \dot{v}_1(0) + \int_0^t \ddot{v}_1(s) ds \geq \dot{v}_1(0) - 16 E(u) t,
    \end{equation}
    which concludes the proof of Proposition~\ref{intro/virial}.
    
\end{proof}
        \subsection{Energy equipartition} \label{energy-equipartition}

With the virial identity established, we demonstrate that suitable solutions to \eqref{GT} undergo {\em energy equipartition}, Proposition~\ref{intro/energy-equipartition}, in the defocusing, $\jb{\gamma} = -1$ case.

\begin{proof}
    Let $u_0 \in H^1$ be a real-valued\footnote{The restriction to real-valued initial data is convenient as it forces $\dot{v}_1(0) = 0$, but is not strictly necessary. Without this restriction, the equalization time $T$ would change.} function with sufficient regularity and decay as to justify the virial identity. Let $u$ be the corresponding solution to \eqref{GT} with initial data $u_0$. 
    
    As $u_0$ is real-valued, we find that $\dot{v}_1(0) = 0$. Applying the identity for $\dot{v}_1$ \eqref{virial/dot-v_1}, we then find that
    \begin{equation*}
        -16 E(u) t \leq 4\Im \int \overline{u} (x \cdot \nabla u(t)) dx \lesssim \sqrt{v(t)}\|u(t)\|_{\dot{H}^1}.
    \end{equation*}
    With the virial identity \eqref{virial/v}, we then find that for $-\frac{1}{2} \leq t \leq 0$.
    \begin{align}\label{inflation/equipartition}
        \|u(t)\|^2_{\dot{H}^1} \gtrsim \frac{E(u)^2 t^2}{v(0) + E(u) t^2}.
    \end{align}
    From this identity, we see that at time $T^2 = v(0)/E(u)$ for $T < 0$,
    \begin{equation*}
        \|u(T)\|^2_{\dot{H}^1} \gtrsim E(u).
    \end{equation*}
    In particular, if 
    \begin{equation*}
        \|u_0\|^2_{\dot{H}^1} \ll\iint_{0}^1 |e^{i\sigma \Delta} u_0|^{p+2} d\sigma dx
    \end{equation*}
    then the kinetic energy must become comparable to the potential energy by time $T$. 
\end{proof}

\subsection{Proof of Theorem~\ref{intro/inflation-energy}}

    We now exploit this virial identity and, in the defocusing case, energy equipartition phenomenon to show norm inflation in $H^s$ for $1 \leq s < s_i$.
\begin{proof}
    Fix some $1 \leq s < s_i$ and define $u_0$ to be the Gaussian initial data
    \begin{equation*}
        u_0(x) = A e^{-|x|^2/4 \sigma^2},
    \end{equation*}
    where we will take $A \gg 1$ and $\sigma \ll 1$. Let $v(t)$ denote the variance of $u$ as in the virial and let $T^2 = v(0)/E(u)$ for $T < 0$ denote the time given by Proposition \ref{energy-equipartition}. 
    Let $u$ be the corresponding solution to \eqref{GT} for some $p > \frac{8}{d-2}$ (i.e., $s_i > 1$) and suppose that $u(t) \in H^s$ exists until time $T$.

    We first focus on the defocusing, $\jb{\gamma} = -1$ case because it represents the more interesting result.
    Following the intuition from energy equipartition, we choose $A,\sigma$ so that $\|u_0\|_{H^s} \ll 1$---hence $\|u_0\|^2_{\dot{H}^1} \ll 1$---and $U(0) \gg 1$. Provided that $|T| \ll 1$, this will cause a rapid increase in kinetic energy, and hence norm inflation in $H^s$.
    
    We now turn to the details. For our particular choice of $u_0$, provided that $\sigma \ll 1$, direct calculation implies that
    \begin{align*}
        \|u_0\|^2_{H^s} \sim A^2 \sigma^{d - 2s}, \quad 
        U(0) \sim A^{p+2} \sigma^{d + 2}, \quad \|u_0\|^2_{\dot{H}^1} \sim A^2 \sigma^{d - 2}, \qtq{and} v(0) \sim A^2 \sigma^{d+2}.
    \end{align*}
    Here we draw the reader's attention to the extra factor of $\sigma^2$ that is present in the potential energy term $U(0)$. This $\sigma^2$ comes from the averaging in time in \eqref{energy} and is not present in the corresponding calculation for the energy of \eqref{NLS}. It is this term that lowers our threshold for norm inflation for \eqref{GT} compared to \eqref{NLS}.
    
    We now choose $A$ and $\sigma$. Fix $1 \leq s < s_i$ and $\epsilon > 0$. Because $s < s_i = \frac{d}{2} - \frac{4}{p}$, we may choose
    \begin{equation*}
        \sigma^{2\big(\frac{d}{2} - \frac{4}{p} - s\big)} = \epsilon^{1 + \frac{4}{sp}}= \epsilon^{0+} \qtq{and} A^p = \epsilon^{-\frac{2}{s}} \sigma^{-4} = \epsilon^{0-},
    \end{equation*}
    which implies that $\sigma \ll 1$ and $A \gg 1$.
    In particular, we have chosen $(A^p \sigma^4)^s = \epsilon^{-2}$.
    
    As $A^p \sigma^4 \gg 1$, we see that $U(0) \gg \|u_0\|^2_{\dot{H}^1}$. In particular, with $T < 0$,
    \begin{equation*}
        E(u) \sim U(0) \qtq{and} T^2 \sim A^{-p}.
    \end{equation*}
    Proposition \eqref{intro/energy-equipartition} and interpolating with the conservation of mass then imply that
    \begin{equation*}
        \|u(T)\|^{2}_{H^s} \gtrsim \|u(T)\|_{H^1}^{2s}\|u\|_{L^2}^{2-2s} \gtrsim E(u)^s M(u)^{1-s} \sim (A^p \sigma^4)^s A^{2}\sigma^{d - 2s}.
    \end{equation*}
    For our choice of $u_0$, provided that $\epsilon \ll 1$, we then find that
    \begin{align*}
        \|u_0\|^2_{H^s} \sim A^2 \sigma^{d - 2s} = \epsilon, \quad
        \|u(T)\|^2_{H^s} \gtrsim \epsilon^{-1}, \qtq{and}
        T^2 \sim \epsilon^{0+}.
    \end{align*}
    Sending $\epsilon \to 0$ then implies norm inflation in $H^s$ and concludes the proof of Theorem~\ref{intro/inflation-energy} in the defocusing case $\jb{\gamma} = -1$.
    
    We now consider the focusing case, $\jb{\gamma} = 1$. Here, we take inspiration from the classical virial argument used to show finite time blow-up; see \cite{GT-dichotomy} for its application to \eqref{GT}. 
    From Proposition~\ref{intro/virial}, following the same reasoning as used for Proposition \ref{energy-equipartition}, we find that for $t < 0$,
    \begin{equation}\label{negative/focusing}
        \|u(t)\|_{\dot{H}^1} \gtrsim \frac{E(u)^2 t^2}{v(0) - E(u)t^2}.
    \end{equation}
    In particular, by time $T^2 = v(0)/E(u)$ for $T < 0$, we find that $\|u(T)\|_{H^1} = \infty$ and hence $\|u(T)\|_{H^s} = \infty$ for all $s \geq 1$.
    
    By the same reasoning as in the defocusing case, provided $1 \leq s < s_i$, for any $\epsilon > 0$, we may choose $A,\sigma$ appropriately so that
    \begin{equation*}
        \|u_0\|^2_{H^s}\sim \epsilon, \quad \|u(T)\|_{H^s}^2 = \infty, \qtq{and} T^2 \sim \epsilon^{0+}.
    \end{equation*}
    This concludes norm inflation in $H^s$ in the focusing, $\jb{\gamma} = 1$ case for $1 \leq s < s_i$.
\end{proof}

\subsection{Brief remark on the inter-critical case}
    In the inter-critical case, $0 \leq s_i \leq 1$, norm inflation and ill-posedness remain unresolved. 
    
    In the inter-critical focusing case, the reasoning presented in the proof of Theorem \ref{intro/inflation-energy}, namely \eqref{negative/focusing}, indicates that \eqref{GT} is ill-posed in $H^s$ for $s < s_i$ in the following sense: For all $\epsilon > 0$ and any $s < s_i$, there exists Schwartz initial data $u_0$ with $\|u_0\|_{H^s} < \epsilon$ such that $u$ blows up---in the sense that $u$ fails to be smooth---within time $|T| < \epsilon$. 

    In the defocusing case, blowup solutions are no longer available. This case will be the subject of future work.
    
\bibliographystyle{abbrv}
\bibliography{references}
\end{document}